\documentclass[11pt]{article}
\usepackage{float}
\usepackage{amsmath}
\usepackage{amssymb}
\usepackage{amsthm}
\usepackage{geometry}
\usepackage{booktabs}
\usepackage[colorlinks=true, citecolor=blue, linkcolor=blue, urlcolor=blue]{hyperref}
\usepackage[utf8]{inputenc}
\usepackage{xurl}
\newtheorem{theorem}{Theorem}[section]
\newtheorem{lemma}[theorem]{Lemma}
\newtheorem{proposition}[theorem]{Proposition}
\newtheorem{definition}[theorem]{Definition}
\newtheorem{remark}[theorem]{Remark}

\newtheorem*{foxconjecture}{Fox Trapezoidal Conjecture} 

\title{Spectral Factorization and Hypergeometric Representations of the Alexander Polynomials of $Th(4,2n+1)$
}
\author{Suman Saurabh}
\date{\vspace{-4ex}}

\begin{document}

\maketitle

\begin{abstract}
We study the Alexander polynomials of the 4-strand Turk's head knots $Th(4,2n+1)$, defined as the closures of the braid $(\sigma_1\sigma_2^{-1}\sigma_3)^{2n+1}$. Using the reduced Burau representation, we derive an annihilating recurrence of order at most 8 and a rational generating function for the resulting polynomial sequence. By executing a multivariable resultant elimination over the reciprocal constraint, we obtain an exact factorization of the normalized Alexander polynomial in terms of Chebyshev polynomials. This factorization produces a binomial convolution formula for an associated coefficient sequence and a representation by a terminating ${}_4F_3$ hypergeometric series. We evaluate the continuous approximation of this representation using the saddle-point method, demonstrating negative curvature in the asymptotic main term. Finally, we describe analytic obstructions to extracting global discrete error bounds via this method, leaving the formal proof of Fox's Trapezoidal Conjecture for this family open.
\end{abstract}

\vspace{0.5cm}
\noindent \textbf{2020 Mathematics Subject Classification:} Primary 57K10, 57K14; Secondary 33C20. \\
\textbf{Keywords:} Alexander polynomial, Turk's head knot, Burau representation, Chebyshev factorization, hypergeometric series, Fox's Trapezoidal Conjecture.

\section{Introduction}

The Alexander polynomial is a standard invariant in classical knot theory \cite{Kawauchi1996, Lickorish1997, Rolfsen1976}. Understanding the coefficient structure of Alexander polynomials across infinite knot families remains a central problem in geometric topology. Of particular interest is the distribution of their coefficients, governed by Fox's Trapezoidal Conjecture:

\begin{foxconjecture}
For an alternating knot, the absolute values of the coefficients of its Alexander polynomial form a trapezoidal sequence. A stronger variant posits that the non-plateau portions of the sequence are log-concave, satisfying the inequality $a_i^2 > |a_{i-1}| \cdot |a_{i+1}|$.
\end{foxconjecture}

While Murasugi \cite{Murasugi1958} established fundamental algebraic properties for alternating knots, structural proofs of the trapezoidal condition remain an open problem for general alternating families. However, recent advancements have established the conjecture for specific classes, such as diagrammatic Murasugi sums and twist-concentrated links, using topological and diagrammatic bounds \cite{azarpendar2024foxstrapezoidalconjecture}. For infinite braid families where the crossing number grows arbitrarily large, such as Th(4,q), direct algebraic factorizations and asymptotic methods provide a necessary complementary approach.

Evaluating the determinant of the Burau representation \cite{Burau1936} for higher-strand braids as the crossing number $q \to \infty$ introduces significant computational complexity. While divisibility and structural properties of the polynomial invariants for $Th(p,q)$ have been investigated using cyclotomic block decompositions \cite{Takemura2018}, explicit closed-form spectral factorizations have primarily been limited to the 3-strand case (see \cite{AlSukaiti2023} and the recent survey by \cite{diprisa2024turksheadknotslinks}). The 4-strand family $Th(4,q)$ is substantially more complicated than the 3-strand case because the associated Burau characteristic polynomial is cubic rather than quadratic. 

In this paper, we present three structural results for the family $Th(4,2n+1)$:
\begin{enumerate}
    \item We bypass direct determinant evaluation by constructing an annihilating operator for the Burau sequence. Applying an even-step transformation isolates the odd-$q$ polynomial sequence, yielding a rational generating function over $\mathbb{Z}[t, t^{-1}]$. 
    \item We obtain a Chebyshev factorization of a normalized Alexander polynomial through a resultant computation. This factorization leads to a trigonometric product representation, an explicit binomial convolution formula for the coefficients, and a terminating hypergeometric representation.
    \item We analyze the continuous approximation of the hypergeometric representation via the saddle-point method. This calculation demonstrates negative curvature in the continuous main term. We also document obstructions to bounding the discrete error terms, showing that standard global Taylor remainders are insufficient to prove discrete log-concavity for finite $n$.
\end{enumerate}

While focused specifically on the invariants of $Th(4,2n+1)$, the spectral factorization technique established herein—evaluating the Burau representation over roots of unity and reducing via symmetric resultant polynomials—provides an explicit algebraic template theoretically adaptable to other alternating braid closures.

The remainder of this paper is organized as follows. In Section~\ref{sec:BraidRepr&Recurr}, we utilize the reduced Burau representation to construct an annihilating operator, establishing a rational generating function for the Alexander polynomials of $Th(4, 2n+1)$. Section~\ref{sec:SpecFactrz&Chebyshev} defines a reciprocal normalization and executes a resultant computation over roots of unity to factor the sequence algebraically via Chebyshev polynomials. In Section~\ref{sec:HyperGeomRed}, we expand this factorization into a discrete binomial convolution, obtaining an explicit sequence representation as a terminating ${}_4F_3$ generalized hypergeometric series. Finally, Section~\ref{sec:AsympApprox} evaluates the continuous limit of this hypergeometric sequence using the saddle-point method to demonstrate asymptotic log-concavity, concluding with an analysis of the analytic obstructions that inhibit the extraction of global discrete remainder bounds. Explicit polynomial coefficients and algebraic resultant expansions are recorded in the Appendices.

\section{The Braid Representation and Recurrence} \label{sec:BraidRepr&Recurr}

Let $B_4$ denote the braid group on 4 strands. We consider the infinite family of knots generated by the closure of the mixed-sign braid word $\beta_q = (\sigma_1\sigma_2^{-1}\sigma_3)^q$. We restrict the index to odd integers $q = 2n+1$ for $n \ge 1$. This ensures the coprimality condition $\gcd(4, q) = 1$, which guarantees the geometric closure forms a single-component knot. 

We employ the reduced Burau representation $B_4 \to GL_3(\mathbb{Z}[t, t^{-1}])$. The standard generators evaluate to:
\begin{equation*}
\rho(\sigma_1) = \begin{pmatrix} -t & 1 & 0 \\ 0 & 1 & 0 \\ 0 & 0 & 1 \end{pmatrix}, \quad
\rho(\sigma_2)^{-1} = \begin{pmatrix} 1 & 0 & 0 \\ 1 & -t^{-1} & t^{-1} \\ 0 & 0 & 1 \end{pmatrix}, \quad
\rho(\sigma_3) = \begin{pmatrix} 1 & 0 & 0 \\ 0 & 1 & 0 \\ 0 & t & -t \end{pmatrix}.
\end{equation*}

\subsection{Characteristic Polynomial and Trace Formula}

Let $M(t) = \rho(\sigma_1\sigma_2^{-1}\sigma_3)$ denote the fundamental matrix block. 

\begin{lemma}\label{lem:char_poly}
The characteristic polynomial $\chi_M(E,t) = \det(EI - M(t))$ is given by:
\begin{equation}
\chi_M(E,t) = E^3 + (2t + t^{-1} - 2)E^2 + (t^2 - 2t + 2)E + t.
\end{equation}
\end{lemma}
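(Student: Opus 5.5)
The plan is to compute the matrix $M(t)=\rho(\sigma_1)\rho(\sigma_2)^{-1}\rho(\sigma_3)$ explicitly. Then I read off the coefficients of $\chi_M$ from the invariants of a $3\times 3$ matrix:
\[
\chi_M(E,t)=E^3-\operatorname{tr}(M)\,E^2+c_2(M)\,E-\det M,
\]
where $c_2(M)$ is the sum of the three principal $2\times 2$ minors. This avoids expanding $\det(EI-M)$ symbolically in $E$. Every step is a finite computation in $\mathbb{Z}[t,t^{-1}]$, so the only care needed is bookkeeping of the $t^{-1}$ entries.

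First I multiply $\rho(\sigma_1)\rho(\sigma_2)^{-1}$. Its first row is $(-t)$ times the first row of $\rho(\sigma_2)^{-1}$ plus its second row, giving $(1-t,\,-t^{-1},\,t^{-1})$. The other two rows are $(1,-t^{-1},t^{-1})$ and $(0,0,1)$. Right multiplication by $\rho(\sigma_3)$ leaves the first column unchanged. It replaces the second column by (column 2) $+\,t\cdot$(column 3), and the third column by $-t\cdot$(column 3). This should yield
\[
M(t)=\begin{pmatrix} 1-t & 1-t^{-1} & -1\\ 1 & 1-t^{-1} & -1 \\ 0 & t & -t\end{pmatrix}.
\]

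Next I compute the three invariants. The trace is $(1-t)+(1-t^{-1})-t=2-2t-t^{-1}$, so the $E^2$ coefficient is $2t+t^{-1}-2$. For the determinant I expand along the first row:
\[
\det M=(1-t)\cdot 1-(1-t^{-1})(-t)+(-1)\,t=-t,
\]
so the constant term $-\det M$ equals $t$. As a cross-check, $\det\rho(\sigma_i)^{\pm1}=(-t)^{\pm1}$, so $\det M=(-t)(-t^{-1})(-t)=-t$. The principal minors are:
\begin{itemize}
\item indices $\{1,2\}$: $(1-t)(1-t^{-1})-(1-t^{-1})=1-t$;
\item indices $\{1,3\}$: $(1-t)(-t)=t^2-t$;
\item indices $\{2,3\}$: $(1-t^{-1})(-t)+t=1$.
\end{itemize}
Their sum is $t^2-2t+2$, which matches the stated $E$ coefficient.

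I expect no real obstacle. The step most prone to error is the matrix product, specifically tracking the $t^{-1}$ terms that cancel in the $(2,2)$ and $(3,2)$ entries after multiplying by $\rho(\sigma_3)$. To guard against this, I will use the multiplicativity check on $\det M$ above. I will also specialize to $t=1$, where the Burau representation reduces to the permutation action and $\chi_M(E,1)=E^3+E^2+E+1$. This is consistent with $\sigma_1\sigma_2^{-1}\sigma_3$ inducing a $4$-cycle on the strands.
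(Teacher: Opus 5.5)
Your proposal is correct and follows the paper's proof essentially step for step: you compute $M(t)$ explicitly, then read off $\chi_M$ from the trace, the sum of the principal $2\times 2$ minors, and $\det M$, and all your intermediate values agree with the paper's. The differences are minor: you expand $\det M$ along the first row rather than the first column, and you add two sanity checks the paper omits, namely multiplicativity of the determinant and the specialization $t=1$ giving $E^3+E^2+E+1$ for a $4$-cycle.
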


\begin{proof}
Matrix multiplication of the constituent generators yields the base matrix:
\begin{equation*}
M(t) = \begin{pmatrix} 1-t & 1-t^{-1} & -1 \\ 1 & 1-t^{-1} & -1 \\ 0 & t & -t \end{pmatrix}.
\end{equation*}
The characteristic polynomial expands via its principal invariants as $E^3 - \operatorname{tr}(M)E^2 + c_1 E - \det(M)$, where $c_1$ represents the sum of the principal $2 \times 2$ minors. We explicitly compute these invariants:
\begin{enumerate}
    \item \textbf{Trace:} $\operatorname{tr}(M) = (1-t) + (1-t^{-1}) - t = 2 - 2t - t^{-1}$.
    \item \textbf{Principal Minors ($c_1$):} The sum of the principal $2 \times 2$ minors evaluates to:
    \begin{align*}
    c_1 &= \det\begin{pmatrix} 1-t^{-1} & -1 \\ t & -t \end{pmatrix} + \det\begin{pmatrix} 1-t & -1 \\ 0 & -t \end{pmatrix} + \det\begin{pmatrix} 1-t & 1-t^{-1} \\ 1 & 1-t^{-1} \end{pmatrix} \\
    &= \big( -t(1-t^{-1}) + t \big) + \big( -t(1-t) \big) + \big( (1-t)(1-t^{-1}) - (1-t^{-1}) \big) \\
    &= (1) + (t^2-t) + (1 - t^{-1} - t + 1 - 1 + t^{-1}) \\
    &= 1 + t^2 - t - t + 1 = t^2 - 2t + 2.
    \end{align*}
    \item \textbf{Determinant:} Expanding along the first column shows $\det(M) = -t$.
\end{enumerate}
Substitution yields the stated polynomial.
\end{proof}

The Alexander polynomial of the closed braid is given by Burau's determinant identity $\Delta_q(t) \doteq \frac{1-t}{1-t^4} \det(I - M(t)^q)$ \cite{Birman1974, Burau1936}. Factoring the denominator gives:
\begin{equation*}
\Delta_q(t) \doteq \frac{\det(I - M(t)^q)}{1+t+t^2+t^3}.
\end{equation*}
Let $F_q(t) = \det(I - M(t)^q)$. We decompose this determinant into linear traces. 

\begin{lemma}[Exterior Trace Decomposition]\label{lem:trace}
The determinant sequence decomposes as:
\begin{equation}
F_q(t) = 1 - \operatorname{tr}(M^q) + \operatorname{tr}\big((\wedge^2 M)^q\big) - (-t)^q.
\end{equation}
\end{lemma}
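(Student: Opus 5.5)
We will use the standard expansion of $\det(I - A)$ for a $3\times 3$ matrix $A$ in terms of the elementary symmetric functions of its eigenvalues, and apply it to $A = M(t)^q$. If $\lambda_1,\lambda_2,\lambda_3$ are the eigenvalues of $A$, then
\begin{equation*}
\det(I-A)=\prod_{i=1}^3(1-\lambda_i)=1-e_1(\lambda)+e_2(\lambda)-e_3(\lambda).
\end{equation*}
The first step is to identify each $e_k$ as a trace of an exterior power. We have $e_1=\operatorname{tr}(A)$, $e_2=\operatorname{tr}(\wedge^2 A)$ (the sum of the principal $2\times 2$ minors of $A$) and $e_3=\det A$. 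This is a polynomial identity over the commutative ring $\mathbb{Z}[t,t^{-1}]$. To avoid diagonalizability issues, we can either pass to an algebraic closure of $\mathbb{Q}(t)$, where eigenvalues exist and the triangularization argument gives the identity, or simply read off the coefficients of the characteristic polynomial $\det(EI-A)$ at $E=1$, exactly as in the proof of Lemma~\ref{lem:char_poly}.

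The second step is to specialize to $A=M^q$, using the functoriality of exterior powers. Since $\wedge^k$ is multiplicative, $\wedge^k(M^q)=(\wedge^k M)^q$. Hence $e_1(M^q)=\operatorname{tr}(M^q)$ and $e_2(M^q)=\operatorname{tr}\big((\wedge^2 M)^q\big)$. Equivalently, if $\mu_i$ are the eigenvalues of $M$, then the eigenvalues of $M^q$ are $\mu_i^q$, and the eigenvalues of $\wedge^2 M$ are the products $\mu_i\mu_j$ with $i<j$.

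The final step is the determinant term. By Lemma~\ref{lem:char_poly}, $\det M=-t$, since the constant term of $\chi_M$ is $-\det M = t$. Multiplicativity of the determinant then gives $e_3(M^q)=\det(M^q)=(-t)^q$. Substituting the three identifications into the expansion of $\det(I-M^q)$ yields the stated formula for $F_q(t)$.

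There is no serious obstacle here. The only point needing care is to justify the exterior-power identities without assuming that $M(t)$ is diagonalizable. Working over the algebraic closure of the fraction field $\mathbb{Q}(t)$, where $M$ is triangularizable, settles this, because both sides of each identity are polynomial in the entries of $M$.
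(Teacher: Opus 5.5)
Your proposal is correct and follows essentially the same route as the paper: both expand $\det(I-M^q)=\prod_i(1-\lambda_i^q)$ into elementary symmetric functions of the eigenvalues over an algebraic closure, and identify the three terms with $\operatorname{tr}(M^q)$, $\operatorname{tr}\big((\wedge^2 M)^q\big)$ and $\det(M)^q=(-t)^q$. Your explicit appeal to functoriality $\wedge^2(M^q)=(\wedge^2 M)^q$ and to triangularization just makes precise what the paper cites as ``Newton's identities and the properties of exterior algebras.''
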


\begin{proof}
Let $\lambda_1, \lambda_2, \lambda_3$ be the eigenvalues of $M(t)$. Expanding the determinant definition over the algebraic closure gives:
\begin{align*}
\det(I - M^q) &= (1-\lambda_1^q)(1-\lambda_2^q)(1-\lambda_3^q) \\
&= 1 - (\lambda_1^q + \lambda_2^q + \lambda_3^q) + (\lambda_1^q\lambda_2^q + \lambda_2^q\lambda_3^q + \lambda_3^q\lambda_1^q) - (\lambda_1^q\lambda_2^q\lambda_3^q).
\end{align*}
By Newton's identities and the properties of exterior algebras, we substitute the following invariants:
\begin{align*}
\sum \lambda_i^q &= \operatorname{tr}(M^q), \\
\sum \lambda_i^q\lambda_j^q &= \operatorname{tr}\big((\wedge^2 M)^q\big), \\
\prod \lambda_i^q &= \det(M^q) = (-t)^q.
\end{align*}
Substituting these into the expanded product gives the decomposition.
\end{proof}

\subsection{Annihilating Operator and Generating Function}

\begin{lemma}[Annihilating Operator]\label{lem:annihilator}
Let $E$ be the forward shift operator such that $E(F_q) = F_{q+1}$. The sequence $F_q(t)$ is annihilated by the operator $L(E) = (E-1)(E+t)P_1(E)P_2(E)$, where:
\begin{align*}
P_1(E) &= E^3 + (2t + t^{-1} - 2)E^2 + (t^2 - 2t + 2)E + t, \\
P_2(E) &= E^3 - (t^2 - 2t + 2)E^2 + (2t^2 - 2t + 1)E - t^2.
\end{align*}
\end{lemma}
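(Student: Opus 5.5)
The plan is to treat each of the four summands in the decomposition of Lemma~\ref{lem:trace} separately. For each summand I will exhibit an annihilating polynomial in $E$ that divides $L(E)$. Since the shift operators $P(E)$ with coefficients in $\mathbb{Z}[t,t^{-1}]$ commute, a product of annihilators annihilates a sum. So once each term $1$, $\operatorname{tr}(M^q)$, $\operatorname{tr}((\wedge^2 M)^q)$, $(-t)^q$ is killed by one factor of $L(E)$, the whole sequence $F_q$ is killed by $L(E)$. No distinctness of eigenvalues or minimality is needed: the claim is only that $L$ annihilates, not that it is the minimal annihilator.

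\textbf{The two easy terms.} The constant sequence $1$ satisfies $(E-1)1=0$. The sequence $(-t)^q$ satisfies $(E+t)(-t)^q = (-t)^{q+1} + t(-t)^q = 0$.

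\textbf{The trace term.} By Lemma~\ref{lem:char_poly}, $P_1$ is exactly $\chi_M$. Cayley--Hamilton gives $\chi_M(M)=0$, hence $M^q\chi_M(M)=0$ for every $q\ge 0$. Taking traces yields $P_1(E)\operatorname{tr}(M^q)=0$. This argument works directly over $\mathbb{Z}[t,t^{-1}]$ with no passage to eigenvalues.

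\textbf{The exterior-square term.} The same argument applies to $\wedge^2 M$, provided we show that $P_2$ is the characteristic polynomial of $\wedge^2 M$. I expect this identification to be the main point of the proof. For a $3\times 3$ matrix, $\wedge^2 M$ is conjugate to $\det(M)\,(M^{-1})^{T}$, the transposed adjugate. So its eigenvalues (over the algebraic closure, with multiplicity) are $\mu_k=\lambda_i\lambda_j=\det(M)/\lambda_k=-t/\lambda_k$, using $\det M=-t$ from Lemma~\ref{lem:char_poly}. Equivalently, one can read this off the identity $\lambda_1\lambda_2\lambda_3=-t$.

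Hence the characteristic polynomial of $\wedge^2 M$ is obtained from $\chi_M$ by the substitution $E\mapsto -t/\mu$ followed by clearing denominators. Write $\chi_M(E)=E^3+aE^2+bE+t$ with $a=2t+t^{-1}-2$ and $b=t^2-2t+2$. Substituting and multiplying through by $\mu^3/t$ gives
\[
\mu^3 - b\mu^2 + (at)\mu - t^2 .
\]
Since $at=2t^2-2t+1$, this is exactly $P_2(\mu)$. The polynomial is monic and its roots are the $\mu_k$, so it equals $\det(\mu I-\wedge^2M)$.

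As a cross-check, I will verify the coefficients symbolically from the elementary symmetric functions:
\[
\sum\mu_k=e_2(\lambda)=b,\qquad \sum\mu_i\mu_j=e_1(\lambda)\,e_3(\lambda)=(-a)(-t)=at,\qquad \prod\mu_k=e_3(\lambda)^2=t^2 .
\]
Cayley--Hamilton applied to $\wedge^2 M$, followed by taking traces as in the previous step, then gives $P_2(E)\operatorname{tr}((\wedge^2M)^q)=0$.

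\textbf{Conclusion.} Multiplying the four annihilators gives $L(E)=(E-1)(E+t)P_1(E)P_2(E)$. By the commutation remark at the start, $L(E)$ annihilates each summand and therefore annihilates $F_q$.
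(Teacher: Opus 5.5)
Your proposal is correct and takes essentially the same route as the paper. Both use the four-term splitting from Lemma~\ref{lem:trace}, annihilate each term by one factor, and combine the factors via commutativity of constant-coefficient shift operators; both also obtain the characteristic polynomial of $\wedge^2 M$ from $e_2=b$, $e_1e_3=at$, $e_3^2=t^2$. Your use of Cayley--Hamilton followed by taking traces (instead of the paper's appeal to a ``standard property of holonomic sequences''), and your adjugate substitution $\lambda\mapsto -t/\mu$, are only more explicit justifications of the same steps.
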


\begin{proof}
By Lemma~\ref{lem:trace}, the sequence $F_q(t)$ is a linear combination of a constant sequence, a geometric sequence, and two trace sequences. The set of sequences annihilated by a linear shift operator with constant coefficients forms a vector space. An annihilating operator for the sum is given by the product of the individual annihilators for these constituent sequences:
\begin{enumerate}
    \item The constant sequence $1$ is annihilated by $(E-1)$.
    \item The geometric sequence $(-t)^q$ is annihilated by $(E+t)$.
    \item It is a standard property of holonomic sequences that if a matrix $M$ has characteristic polynomial $\chi_M(E,t)=0$, the sequence of its traces $v_q = \operatorname{tr}(M^q)$ satisfies the linear recurrence defined by the coefficients of $\chi_M(E,t)$. Thus, $P_1(E) = \chi_M(E,t)$ annihilates the trace sequence.
    \item The sequence $\operatorname{tr}\big((\wedge^2 M)^q\big)$ is similarly annihilated by the characteristic polynomial of the exterior square $\wedge^2 M$. Let the characteristic polynomial of $M$ be parameterized as $E^3 - aE^2 + bE - c$. The eigenvalues of $\wedge^2 M$ are the pairwise products $\lambda_1\lambda_2, \lambda_2\lambda_3, \lambda_3\lambda_1$. Utilizing elementary symmetric polynomials, we obtain:
    \begin{align*}
    \sum \lambda_i\lambda_j &= b, \\
    \sum (\lambda_i\lambda_j)(\lambda_j\lambda_k) &= \lambda_1\lambda_2\lambda_3(\lambda_1+\lambda_2+\lambda_3) = ca, \\
    \prod \lambda_i\lambda_j &= (\lambda_1\lambda_2\lambda_3)^2 = c^2.
    \end{align*}
    Thus, the characteristic polynomial of $\wedge^2 M$ evaluates to $E^3 - bE^2 + acE - c^2$. Substituting $a = 2 - 2t - t^{-1}$, $b = t^2 - 2t + 2$, and $c = -t$ from Lemma~\ref{lem:char_poly} yields the operator $P_2(E)$. Consequently, $P_2(E)$ annihilates the sequence $\operatorname{tr}\big((\wedge^2 M)^q\big)$.
\end{enumerate}
Since these operators commute, their product $L(E) = (E-1)(E+t)P_1(E)P_2(E)$ annihilates the direct sum of the kernels of its factors. Therefore, $L(E)$ annihilates the combined sequence $F_q(t)$.
\end{proof}

\begin{lemma}[Even-Step Transformation]\label{lem:even_step}
Let $H_n(t) = F_{2n+1}(t)$ denote the downsampled odd-indexed subsequence. $H_n(t)$ satisfies a homogeneous linear recurrence of order at most 8.
\end{lemma}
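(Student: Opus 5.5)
Write $F_q(t)=\sum_{\mu} c_\mu\,\mu^q$ as a sum over the eight characteristic roots of $L(E)$ from Lemma~\ref{lem:annihilator}. These roots are
\[
1,\quad -t,\quad \lambda_1,\ \lambda_2,\ \lambda_3,\quad \lambda_1\lambda_2,\ \lambda_2\lambda_3,\ \lambda_3\lambda_1 .
\]
By Lemma~\ref{lem:trace} this is an honest exponential-polynomial expansion: each root occurs with coefficient $\pm1$, and no polynomial-in-$q$ multipliers are needed.

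Then
\[
H_n = F_{2n+1} = \sum_\mu (c_\mu \mu)\,(\mu^2)^n ,
\]
so $H_n$ is a linear combination of the geometric sequences $(\mu^2)^n$. It is therefore annihilated by the operator in the shift $S:H_n\mapsto H_{n+1}$ whose roots are the squares $\mu^2$. Concretely, this is $\tilde L(S)=\prod_\mu (S-\mu^2)$. It has degree $8$, and it has coefficients in $\mathbb{Z}[t,t^{-1}]$, because it equals the polynomial whose value at $S=x^2$ is $L(x)L(-x)$ up to sign.

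This is the standard Graeffe root-squaring step. I would write $\tilde L$ factor by factor:
\begin{itemize}
\item $(E-1)$ becomes $(S-1)$;
\item $(E+t)$ becomes $(S-t^2)$;
\item $P_1$ becomes $\tilde P_1(S)$, obtained from $P_1(x)P_1(-x)=-\tilde P_1(x^2)$;
\item $P_2$ becomes the analogous $\tilde P_2$.
\end{itemize}
For the cubic $E^3-aE^2+bE-c$, the squared cubic is $S^3-(a^2-2b)S^2+(b^2-2ac)S-c^2$. Applying this to $(a,b,c)$ and to $(b,ac,c^2)$ gives the two cubic factors explicitly.

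The words ``at most'' cover possible coincidences among the $\mu^2$, or cancellations with $c_\mu=0$. In those cases the minimal recurrence has lower order, but $\tilde L$ still annihilates $H_n$.

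The only point needing care is that squaring distinct roots can create repeated roots, and repeated roots would seem to call for polynomial multipliers. This is harmless. The expansion of $H_n$ is a plain sum of geometric terms, since sums of coefficients for equal $\mu^2$ simply merge. So $\prod(S-\mu^2)$, taken with multiplicity, still annihilates $H_n$ and has degree $8$. Alternatively, one can avoid root arguments entirely by noting that $H_n$ equals $\operatorname{tr}$-type expressions in $N=M^2$: $F_{2n+1}=\det(I-M\,N^n)$ expands via Lemma~\ref{lem:trace} into $\operatorname{tr}(M N^n)$, $\operatorname{tr}(\wedge^2M(\wedge^2N)^n)$ and $t^{2n}$ terms, each annihilated by the characteristic polynomial of $N$, $\wedge^2N$ or $(S-t^2)$, together with $(S-1)$ for the constant term, by Cayley--Hamilton.
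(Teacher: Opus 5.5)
Your proof is correct, and you arrive at exactly the paper's operator. The paper also forms $L(E)L(-E)$ and observes that it is a polynomial $Q(E^2)$ of degree $8$ in $E^2$; where you differ is in \emph{why} this operator kills $H_n$. The paper argues purely at the operator level. Writing $L(E)=A(E^2)+EB(E^2)$ gives $L(E)L(-E)=A(E^2)^2-E^2B(E^2)^2$, and then $Q(E^2)F_q=L(-E)\bigl(L(E)F_q\bigr)=0$, i.e.\ $\sum_j c_jF_{q+2j}=0$, which is restricted to $q=2n+1$. That argument uses nothing beyond $L(E)F_q=0$. It therefore needs no algebraic closure, no eigenvalue bookkeeping, and no discussion of coinciding squares or polynomial multipliers. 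You instead start from the explicit exponential expansion supplied by Lemma~\ref{lem:trace} and identify the operator as $\prod_\mu(S-\mu^2)$ by Graeffe root-squaring. Your handling of merging equal $\mu^2$ is right, and since $\deg L=8$ is even, the identity $L(x)L(-x)=\widetilde{L}(x^2)$ in fact holds with sign $+1$. Your route gives structure the paper leaves implicit. The characteristic roots of the downsampled recurrence are the $\mu^2$, and $Q(S)=(S-1)(S-t^2)\widetilde{P}_1(S)\widetilde{P}_2(S)$, with both cubics in closed form via $S^3-(a^2-2b)S^2+(b^2-2ac)S-c^2$. The paper only records the expanded coefficients computationally in Appendix~\ref{app:gf}; your factorization reproduces them, e.g.\ $d_1(t)=-t^2\sum_\mu\mu^2$. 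Your Cayley--Hamilton variant with $N=M^2$ is essentially the paper's Lemma~\ref{lem:annihilator} rerun directly on the downsampled sequence, and it avoids roots altogether.
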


\begin{proof}
The operator $L(E)$ has degree 8 in $E$. We construct the companion operator $R(E) = L(E)L(-E)$. Any polynomial operator $L(E)$ can be decomposed into its even and odd components, $L(E) = A(E^2) + E B(E^2)$. Multiplying by the radially reflected counterpart eliminates all odd powers of the shift operator via a difference of squares:
\begin{equation*}
R(E) = \big( A(E^2) + E B(E^2) \big)\big( A(E^2) - E B(E^2) \big) = A(E^2)^2 - E^2 B(E^2)^2.
\end{equation*}
Because $R(E)$ contains even powers of $E$, it can be written as a polynomial $Q(E^2)$ of degree 8 in $E^2$. Since $L(E)$ annihilates $F_q(t)$, and polynomial operators with constant coefficients commute, $R(E)$ annihilates $F_q(t)$, meaning $Q(E^2)$ applied to $F_q(t)$ evaluates to zero.

Because $Q(E^2)$ annihilates $F_q(t)$, we have $\sum_{j=0}^8 c_j E^{2j} F_q(t) = 0$, giving $\sum_{j=0}^8 c_j F_{q+2j}(t) = 0$. Restricting the sequence to the odd lattice $q = 2n+1$ yields $\sum_{j=0}^8 c_j F_{2(n+j)+1}(t) = 0$. By the definition of the downsampled sequence $H_k(t) = F_{2k+1}(t)$, this equates to $\sum_{j=0}^8 c_j H_{n+j}(t) = 0$. This defines a homogeneous linear recurrence of order at most 8 for $H_n(t)$, governed by a new forward shift operator $E'$ on the downsampled index $n$, defined by $E'(H_n) = H_{n+1}$.
 
\end{proof}

\begin{theorem}[Recurrence and Generating Function]\label{thm:macro_gf}
Let $P_n(t) = \Delta_{2n+1}(t)$ denote the Alexander knot polynomial sequence for $n \ge 1$. The sequence $P_n(t)$ satisfies a linear recurrence of order at most 8 and admits a rational generating function.
\end{theorem}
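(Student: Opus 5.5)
The plan is to transfer the recurrence for $H_n(t)=F_{2n+1}(t)$ from Lemma~\ref{lem:even_step} to $P_n(t)$ by dividing by the fixed denominator, and then to read off the generating function from the constant-coefficient recurrence.

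First, fix the normalization. By Burau's identity, $P_n(t) \doteq H_n(t)/(1+t+t^2+t^3)$. The ambiguity $\doteq$ means equality up to a unit $\pm t^k$, and we must choose one representative per $n$. To keep linearity, we take the representative $P_n(t) := H_n(t)/(1+t+t^2+t^3)$ exactly, with no $n$-dependent unit. A symmetrizing factor such as $t^{-\deg/2}$ with $\deg$ linear in $n$ would turn into a geometric factor $t^{-cn}$. That would only rescale the recurrence coefficients $c_j \mapsto c_j t^{cj}$, so such a normalization is also admissible. I will remark on this but work with the plain quotient.

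Second, the recurrence. Lemma~\ref{lem:even_step} gives $\sum_{j=0}^{8} c_j(t) H_{n+j}(t)=0$ with $c_j \in \mathbb{Z}[t,t^{-1}]$. The divisor $D(t)=1+t+t^2+t^3$ is independent of $n$, so dividing the relation by $D$ gives $\sum_j c_j P_{n+j}=0$.

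The main obstacle is to ensure this recurrence is nontrivial, i.e.\ that $Q(E^2)=L(E)L(-E)$ is not the zero operator. We need the leading coefficient to be nonzero. Since $L$ is monic of degree 8, $R(E)=L(E)L(-E)$ has leading term $E^{16}$, so $Q$ has leading coefficient $c_8=1$. Thus the order is at most 8, with $c_8=1$ and $c_0 = L(0)^2 \neq 0$.

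Also, $P_n\in\mathbb{Z}[t,t^{-1}]$ because $D \mid F_q$ when $\gcd(4,q)=1$, which is the classical fact that the Alexander polynomial of a knot is a Laurent polynomial. For the generating function itself, however, we may work over $\mathbb{Q}(t)$ and need not use this.

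Third, the generating function $G(x)=\sum_{n\ge1}P_n(t)x^{n}$. Let $Q^*(x)=\sum_{j=0}^8 c_j x^{8-j}$ be the reversed characteristic polynomial, with constant term $c_8=1$. Then $Q^*(x)G(x)$ has coefficient of $x^{m}$ equal to $\sum_j c_j P_{m-8+j}$, which vanishes for $m\ge 9$ (indices $\ge1$) by the recurrence. Hence $Q^*(x)G(x)=N(x)$, where $N$ is a polynomial of degree at most 8 in $x$ whose coefficients are determined by $P_1,\dots,P_8$.

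Therefore $G(x)=N(x)/Q^*(x)$ is rational, with denominator having constant term $1$, so it is invertible as a power series over $\mathbb{Z}[t,t^{-1}]$. The explicit $c_j$ come from expanding $A(E^2)^2-E^2B(E^2)^2$. Their computation is routine, and the coefficients can be recorded in the appendix.
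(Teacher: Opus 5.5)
Your proposal is correct and takes essentially the same route as the paper: divide the recurrence of Lemma~\ref{lem:even_step} by the $n$-independent factor $1+t+t^2+t^3$, then multiply the power series by the reversed characteristic polynomial to obtain a polynomial numerator. Your additional points make explicit what the paper leaves implicit. These are fixing the unit representative (an $n$-linear monomial normalization only rescales $c_j \mapsto c_j t^{cj}$) and checking that $Q$ is monic, so the recurrence is nontrivial and the denominator has constant term $1$; starting the series at $n\ge 1$ merely raises the numerator degree bound from $7$ to $8$.
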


\begin{proof}
Because the denominator $(1+t+t^2+t^3)$ of the Alexander polynomial identity $\Delta_{2n+1}(t) \doteq \frac{F_{2n+1}(t)}{1+t+t^2+t^3}$ is independent of the sequence index $n$, the linear operator $Q(E')$ from Lemma~\ref{lem:even_step} that annihilates the numerator simultaneously annihilates the quotient $P_n(t)$. Thus, $P_n(t)$ satisfies the linear recurrence:
\begin{equation*}
\sum_{j=0}^8 c_j(t) P_{m+j}(t) = 0 \quad \text{for all } m \ge 0.
\end{equation*}
Let $G(x,t) = \sum_{m=0}^\infty P_m(t) x^m$ be the power series. Multiplying the recurrence by $x^{m+8}$ and summing over all $m \ge 0$ yields:
\begin{equation*}
\sum_{m=0}^\infty \sum_{j=0}^8 c_j(t) P_{m+j}(t) x^{m+8} = 0.
\end{equation*}
Applying the index shift $k = m+j$, we can rewrite the summation over the bounds of $k$:
\begin{equation*}
\sum_{j=0}^8 c_j(t) x^{8-j} \sum_{k=j}^\infty P_k(t) x^k = 0.
\end{equation*}
Substituting the identity $\sum_{k=j}^\infty P_k(t) x^k = G(x,t) - \sum_{k=0}^{j-1} P_k(t) x^k$ isolates the generating function:
\begin{equation*}
\left( \sum_{j=0}^8 c_j(t) x^{8-j} \right) G(x,t) - \sum_{j=1}^8 c_j(t) x^{8-j} \left( \sum_{k=0}^{j-1} P_k(t) x^k \right) = 0.
\end{equation*}
We define the characteristic denominator polynomial 
\begin{equation*}
    D(x,t) = \sum_{j=0}^8 c_j(t) x^{8-j} = \sum_{j=0}^8 c_{8-j}(t) x^j
\end{equation*}
The remaining double summation on the right-hand side contains only finite initial values $P_k(t)$ for $k < 8$, defining a finite polynomial numerator $N(x,t)$ of degree $\le 7$. Dividing by $D(x,t)$ proves the generating function is rational. Thus, $P_n(t)$ admits a rational generating function of the form $G(x,t) = \frac{N(x,t)}{D(x,t)}$. Theorem~\ref{thm:macro_gf} establishes the existence of this rational form; Appendix \ref{app:gf} records the computer-generated polynomials $N(x,t)$ and $D(x,t)$.
\end{proof}

\section{Spectral Factorization and Chebyshev Structure} \label{sec:SpecFactrz&Chebyshev}

\subsection{Matrix Factorization over Roots of Unity}

\begin{lemma}[Determinant Factorization]\label{lem:det_fact}
Let $q = 2n+1$. Over the $q$-th roots of unity $\zeta^j = \exp\left(\frac{2\pi i j}{q}\right)$, the Burau determinant expands as:
\begin{equation}
\det(I - M^q) = \prod_{j=0}^{2n} \det(I - \zeta^{-j} M).
\end{equation}
\end{lemma}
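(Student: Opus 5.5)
The identity follows from the polynomial factorization of $1 - x^q$ over the $q$-th roots of unity, applied to $M$ itself or to its eigenvalues. Throughout we work in $\mathrm{Mat}_3(K)$, where $K$ is an algebraic closure of $\mathbb{Q}(t)$, so that $\zeta = \exp(2\pi i/q)$ is available as a scalar commuting with everything.

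First, we use the scalar identity
\begin{equation*}
1 - x^q = \prod_{j=0}^{q-1} (1 - \zeta^{-j} x),
\end{equation*}
which holds in $\mathbb{C}[x]$. Both sides are monic up to the sign $(-1)^q$ in $x$ and have degree $q$. The roots of the right-hand side are $x = \zeta^{j}$ for $j = 0, \dots, q-1$, which are exactly the $q$ distinct roots of $x^q = 1$. Comparing the coefficients of $x^q$ confirms this: on the right, the product of the $-\zeta^{-j}$ is $(-1)^q \zeta^{-q(q-1)/2}$. Since $q$ is odd, $q(q-1)/2$ is a multiple of $q$, so this equals $(-1)^q$. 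Since the variable $x$ commutes with the scalars $\zeta^{-j}$, the identity specializes to any element of a commutative algebra. In particular it holds in $K[M] \subset \mathrm{Mat}_3(K)$, giving
\begin{equation*}
I - M^q = \prod_{j=0}^{2n} (I - \zeta^{-j} M).
\end{equation*}

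Second, we take determinants. Multiplicativity of $\det$ then gives
\begin{equation*}
\det(I - M^q) = \prod_{j=0}^{2n} \det(I - \zeta^{-j} M).
\end{equation*}

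An equivalent route goes through eigenvalues, as in the proof of Lemma~\ref{lem:trace}. Write $\det(I - M^q) = \prod_i (1 - \lambda_i^q)$, factor each term $1 - \lambda_i^q$ by the scalar identity, and regroup over $j$ using $\det(I - \zeta^{-j}M) = \prod_i (1 - \zeta^{-j}\lambda_i)$.

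The only real care point is the sign and normalization bookkeeping in the scalar factorization, namely checking that the leading constant is exactly $1$ rather than $\pm 1$. That is settled by comparing constant terms: both sides equal $1$ at $x = 0$. We should also note that the identity lives over $\mathbb{Z}[\zeta][t, t^{-1}]$, not over $\mathbb{Z}[t, t^{-1}]$. The individual factors $\det(I - \zeta^{-j} M)$ are not Laurent polynomials with integer coefficients; only their product is. The later resultant step presumably exploits this by grouping conjugate factors.
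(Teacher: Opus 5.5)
Your proof is correct and follows the same route as the paper: factor the scalar polynomial over the $q$-th roots of unity, evaluate the identity at $M$ inside the commutative algebra generated by $M$, and take determinants. You also explicitly check the normalization constant (both sides equal $1$ at $x=0$), which the paper skips when passing from $x^q-1=\prod_j(x-\zeta^j)$ to $I-M^q=\prod_j(I-\zeta^{-j}M)$.
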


\begin{proof}
The polynomial $x^q - 1$ factors as $\prod_{j=0}^{q-1} (x - \zeta^j)$. Evaluating this identity at the matrix $M$ yields the matrix factorization $I - M^q = \prod_{j=0}^{q-1} (I - \zeta^{-j} M)$. As the factors are polynomials in the same matrix $M$, they commute. Taking the determinant of both sides yields the product representation.
\end{proof}

\begin{lemma}[Trivial Root Evaluation]\label{lem:trivial_root}
The trivial root $j=0$ evaluates to:
\[ \det(I - M(t)) = t^{-1}(1 + t + t^2 + t^3). \]
\end{lemma}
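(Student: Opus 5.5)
The plan is to read this evaluation directly off Lemma~\ref{lem:char_poly} rather than expanding $I - M(t)$ entry by entry. By definition $\chi_M(E,t) = \det(EI - M(t))$, and this is a polynomial identity in $E$ over $\mathbb{Z}[t,t^{-1}]$. Specializing $E = 1$ therefore gives $\det(I - M(t)) = \chi_M(1,t)$ with no further work.

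The key step is then a single substitution into the cubic of Lemma~\ref{lem:char_poly}:
\begin{equation*}
\chi_M(1,t) = 1 + (2t + t^{-1} - 2) + (t^2 - 2t + 2) + t .
\end{equation*}
I will collect terms. The constants $1 - 2 + 2$ leave $1$. The linear terms $2t - 2t + t$ leave $t$. What remains is $t^{-1}$ and $t^2$. Altogether this gives $t^{-1} + 1 + t + t^2$. Factoring out $t^{-1}$ yields $t^{-1}(1 + t + t^2 + t^3)$, which is the stated expression.

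As an independent sanity check, I would also compute $\det(I - M)$ directly from the explicit matrix $M(t)$ recorded in the proof of Lemma~\ref{lem:char_poly}. The matrix is
\begin{equation*}
I - M = \begin{pmatrix} t & t^{-1}-1 & 1 \\ -1 & t^{-1} & 1 \\ 0 & -t & 1+t \end{pmatrix},
\end{equation*}
and I would expand along the first column to confirm the same Laurent polynomial.

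There is no genuine obstacle here. The only point needing care is sign bookkeeping in the middle coefficient $t^2 - 2t + 2$, where the $-2t$ must cancel against $2t$ from the $E^2$ coefficient. I would also remark afterward why this matters for the product in Lemma~\ref{lem:det_fact}. The $j=0$ factor exactly cancels the Burau denominator $1+t+t^2+t^3$, up to the unit $t^{-1}$. Hence $\Delta_{2n+1}(t) \doteq \prod_{j=1}^{2n} \det(I - \zeta^{-j}M)$, which sets up the pairing of conjugate roots $j$ and $q-j$ in the subsequent factorization.
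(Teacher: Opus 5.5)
Your proof is correct, but it takes a different route from the paper.

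\textbf{The paper's route.} The paper writes out $I - M(t)$ explicitly and expands the determinant along the first column. That is exactly the computation you relegate to a sanity check.

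\textbf{Your route.} You specialize the identity $\chi_M(E,t) = \det(EI - M(t))$ at $E=1$ and read off $\chi_M(1,t) = t^{-1} + 1 + t + t^2$ from Lemma~\ref{lem:char_poly}. This is legitimate because evaluation $E \mapsto 1$ is a ring homomorphism $\mathbb{Z}[t,t^{-1}][E] \to \mathbb{Z}[t,t^{-1}]$, and the determinant commutes with it. Note also that $\chi_M(1,t) = 1 - \operatorname{tr}M + c_1 - \det M$ is precisely the $q=1$ case of Lemma~\ref{lem:trace}.

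\textbf{Comparison.} Your route is shorter and turns the lemma into a corollary of invariants already computed. However, it inherits any arithmetic slip in the trace and principal-minor computations of Lemma~\ref{lem:char_poly}. The paper's direct expansion is self-contained and independent of those invariants, so it simultaneously cross-checks the characteristic polynomial. Doing both, as you propose, gives that cross-check for free.

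\textbf{A further check.} There is also a topological check. By two Markov destabilizations, the closure of $\sigma_1\sigma_2^{-1}\sigma_3$ is the unknot. Hence $\det(I-M)/(1+t+t^2+t^3)$ must be a unit, and $t^{-1}$ is one.
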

\begin{proof}
By direct evaluation of the identity offset matrix:
\begin{equation*}
I - M(t) = \begin{pmatrix} t & t^{-1}-1 & 1 \\ -1 & t^{-1} & 1 \\ 0 & -t & 1+t \end{pmatrix}.
\end{equation*}
Expanding the determinant along the first column yields:
\begin{align*}
\det(I-M(t)) &= t \big( t^{-1}(1+t) - (-t) \big) - (-1) \big( (t^{-1}-1)(1+t) - (-t) \big) + 0 \\
&= t \big( t^{-1} + 1 + t \big) + \big( t^{-1} + 1 - 1 - t + t \big) \\
&= \big( 1 + t + t^2 \big) + t^{-1} = t^{-1}(1 + t + t^2 + t^3).
\end{align*}
\end{proof}

\subsection{Reciprocal Normalization}

\begin{definition}[Normalized Polynomial Representation]\label{def:recip_poly}
The Burau determinant identity defines the Alexander polynomial up to a unit $\pm t^k$. We fix this ambiguity by defining the normalized ordinary polynomial $A_{2n+1}(z)$ to be the polynomial resulting from the unnormalized determinant expansion evaluated at $t=-z$ and scaled by $z^{2n+1}$:
\begin{equation}
A_{2n+1}(z) = -z^{2n+1} \frac{\det(I - M(-z)^{2n+1})}{1-z+z^2-z^3}.
\end{equation}
This selects a unique representative of the Alexander equivalence class. We define its corresponding symmetric Laurent representative as $\nabla_{2n+1}(z) = z^{-3n} A_{2n+1}(z)$.
\end{definition}

\subsection{Resultant Elimination}

Evaluating the characteristic polynomial $\chi_M(E,t)$ from Lemma~\ref{lem:char_poly} at $t=-z$, and replacing the indeterminate $E$ with $x$ to distinguish it from the shift operator, yields:
\begin{equation}
\chi_M(x, -z) = x^3 - (2z + z^{-1} + 2)x^2 + (z^2 + 2z + 2)x - z.
\end{equation}

\begin{lemma}[Reciprocity Relation]\label{lem:reciprocity}
The characteristic polynomial satisfies the reciprocity identity:
\[
\chi_M(x^{-1},-z) = -zx^{-3}\chi_M(x,-z^{-1}).
\]
\end{lemma}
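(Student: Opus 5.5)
The plan is to verify the identity directly, coefficient by coefficient. Both sides are Laurent polynomials in $x$ and $z$, so it suffices to clear the factor $x^{-3}$ and compare the cubic polynomials in $x$ that remain. I will use the explicit form of $\chi_M(x,-z)$ displayed just before the statement, which was obtained from Lemma~\ref{lem:char_poly} by setting $t=-z$.

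\textbf{Left side.} Substituting $x \mapsto x^{-1}$ and multiplying through by $x^3$ gives
\begin{equation*}
x^3\chi_M(x^{-1},-z) = 1 - (2z+z^{-1}+2)x + (z^2+2z+2)x^2 - z x^3 .
\end{equation*}
This simply reverses the coefficient list of $\chi_M(x,-z)$.

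\textbf{Right side.} Put $w = z^{-1}$ in the same formula:
\begin{equation*}
\chi_M(x,-z^{-1}) = x^3 - (2z^{-1}+z+2)x^2 + (z^{-2}+2z^{-1}+2)x - z^{-1}.
\end{equation*}
Multiplying by $-z$ yields
\begin{equation*}
-z x^3 + (z^2+2z+2)x^2 - (2z+z^{-1}+2)x + 1 .
\end{equation*}

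\textbf{Comparison.} The two cubics agree term by term. The constant terms are both $1$, and the coefficients of $x$ are both $-(2z+z^{-1}+2)$. The coefficients of $x^2$ are both $z^2+2z+2$, and those of $x^3$ are both $-z$. Dividing by $x^3$ recovers the stated identity.

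There is no real obstacle here; the only care needed is bookkeeping in the substitution $z\mapsto z^{-1}$. In particular, the middle coefficients must swap roles correctly: $z\cdot(z^{-2}+2z^{-1}+2) = z^{-1}+2+2z$ and $z\cdot(2z^{-1}+z+2) = z^2+2z+2$. Conceptually, the identity says that inverting the eigenvalues of $M(-z)$ corresponds to replacing $z$ by $z^{-1}$. This is the spectral shadow of the symmetry $\Delta(t)\doteq\Delta(t^{-1})$, and it is what the subsequent resultant elimination will exploit.
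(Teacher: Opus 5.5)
Your proof is correct and is essentially the paper's argument. Both verify the identity by substituting $z\mapsto z^{-1}$ into the explicit cubic, multiplying by $-z$, and comparing coefficients; the only cosmetic difference is that you clear the factor $x^{-3}$ and compare cubic polynomials, whereas the paper expands $-zx^{-3}\chi_M(x,-z^{-1})$ directly as a Laurent polynomial in $x$.
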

\begin{proof}
Evaluating $\chi_M(x, -z^{-1})$ at $x$ and multiplying the entire expression by $-zx^{-3}$ implies:
\begin{align*}
-zx^{-3} \chi_M(x, -z^{-1}) &= -zx^{-3} \left( x^3 - (2z^{-1}+z+2)x^2 + (z^{-2}+2z^{-1}+2)x - z^{-1} \right) \\
&= -z + (2+z^2+2z)x^{-1} - (z^{-1}+2+2z)x^{-2} + x^{-3} \\
&= \chi_M(x^{-1}, -z).
\end{align*}
\end{proof}

\begin{lemma}[Resultant Polynomial]\label{lem:resultant}
Let $v = x + x^{-1}$ denote the reciprocal spectral parameter. Clearing the Laurent denominator by defining $\widetilde{\chi}_M(x, w) = w \chi_M(x, -w)$, the resultant polynomial over the constraint $w^2-uw+1=0$ evaluates to:
\begin{equation}
R(x, u) = \operatorname{Res}_w\big(\widetilde{\chi}_M(x, w), w^2-uw+1\big) = -x^3 (u-v)(u-v+2)^2.
\end{equation}
\end{lemma}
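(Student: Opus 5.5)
The plan is to compute the resultant through the roots of the quadratic constraint rather than through a Sylvester determinant, and then identify the factorization by locating zeros in $u$.

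\textbf{Step 1: write $\widetilde{\chi}_M$ as a cubic in $w$.} From the displayed formula for $\chi_M(x,-z)$, multiplication by $w$ gives
\begin{equation*}
\widetilde{\chi}_M(x,w) = x\,w^3 + (-2x^2+2x-1)\,w^2 + (x^3-2x^2+2x)\,w - x^2 .
\end{equation*}

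\textbf{Step 2: reduce the resultant to a product over roots.} Let $w_1, w_2$ be the roots of $g(w)=w^2-uw+1$, so that $w_1+w_2=u$ and $w_1w_2=1$. Since $g$ is monic and $3\cdot 2$ is even, the resultant equals
\begin{equation*}
\operatorname{Res}_w(\widetilde{\chi}_M, g) = \widetilde{\chi}_M(x,w_1)\,\widetilde{\chi}_M(x,w_2).
\end{equation*}
To evaluate this, reduce $\widetilde{\chi}_M$ modulo $g$ using $w^2 \equiv uw-1$ and $w^3 \equiv (u^2-1)w-u$. This gives $\widetilde{\chi}_M \equiv \alpha w+\beta$, with $\alpha,\beta \in \mathbb{Z}[x,u]$ explicit and of degree at most $2$ in $u$. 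The resultant is then the symmetric expression
\begin{equation*}
(\alpha w_1+\beta)(\alpha w_2+\beta) = \alpha^2 + \alpha\beta u + \beta^2 .
\end{equation*}
This is a polynomial of degree $3$ in $u$, which matches the degree of the claimed right-hand side.

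\textbf{Step 3: match the claimed factorization by zeros in $u$.} Rather than expanding fully, I would work by roots.

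\emph{The factor $(u-v)$.} The value $u=v$ corresponds to $w_1=x$, $w_2=x^{-1}$. A direct substitution shows $\widetilde{\chi}_M(x,x)=x\,\chi_M(x,-x)=0$; the bracket collapses to $x^3-2x^3+x^3-x+2x-x-2x^2+2x^2=0$. Hence $(u-v)$ divides $R$.

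\emph{The factor $(u-v+2)^2$.} The value $u=v-2$ corresponds to $w+w^{-1}=x+x^{-1}-2$. Here I would show that $\widetilde{\chi}_M(x,\cdot)$ vanishes at one of these $w$ to the appropriate order, or, more cleanly, that $\alpha^2+\alpha\beta u+\beta^2$ and its $u$-derivative both vanish at $u=v-2$. The substitution $u=v-2$ is made after multiplying through by a power of $x$, so that everything stays polynomial.

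\emph{The constant.} Finally, compare the leading coefficient in $u$; this should be $-x^3$. Equivalently, compare a single specialization, for instance $x=1$ (so $v=2$).

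\textbf{Consistency check.} The reciprocity relation of Lemma~\ref{lem:reciprocity} implies that $x^{-3}R(x,u)$ is invariant under $x\mapsto x^{-1}$. Therefore it must be a polynomial in $v$ and $u$, which is consistent with the form $-x^3(u-v)(u-v+2)^2$.

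\textbf{Main obstacle.} I expect the hardest step to be the double root at $u=v-2$. If the derivative check becomes messy, I would instead divide $\alpha^2+\alpha\beta u+\beta^2$ by $-x^3(u-v)$ directly and recognize the quotient as a perfect square in $(u-v+2)$, using $x^2+1=vx$ to simplify coefficients.
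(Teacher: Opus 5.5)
Your route differs from the paper's and can be made to work. Steps 1 and 2 are correct: the cubic in $w$, the sign of the resultant, the reduction $w^3\equiv(u^2-1)w-u$, and the norm $\alpha^2+\alpha\beta u+\beta^2$. The check $\widetilde{\chi}_M(x,x)=0$ is also correct.

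The paper instead multiplies out $\chi_M(x,-z)\chi_M(x,-z^{-1})$ coefficient by coefficient in $x$. It rewrites the palindromic sextic as $x^3$ times a cubic in $v$, and then factors that cubic.

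As written, however, your proposal stops where the content of the lemma lies. The double factor $(u-v+2)^2$ and the constant $-x^3$ are announced, not shown. Three smaller points also need attention:
\begin{itemize}
\item The norm has naive degree $4$ in $u$, since $\alpha$ is quadratic. Your claim of ``degree $3$'' needs the observation that the $u^4$ terms $x^2$ and $-x^2$ cancel. Without it, three roots and a leading coefficient do not determine the polynomial.
\item Specializing $x=1$ is not equivalent to comparing leading coefficients. It fixes the cofactor only at $x=1$, not as a function of $x$.
\item Your first mechanism for the double root cannot work as stated. For generic $x$ the roots of $\widetilde{\chi}_M(x,\cdot)$ are simple. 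The double root arises because, at $u=v-2$, \emph{both} roots of the constraint are roots of $\widetilde{\chi}_M(x,\cdot)$.
\end{itemize}

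The missing step is one line inside your own framework. Use $(x-1)^2=x(v-2)$ and $2x^2-2x+1=x\,(v-2+x)$. Then your remainder coefficients $\alpha=xu^2-(2x^2-2x+1)u+x(x-1)^2$ and $\beta=(x-1)^2-xu$ factor as
\[
\beta=-x(u-v+2),\qquad \alpha=x(u-v+2)(u-x),
\]
hence
\[
\alpha^2+\alpha\beta u+\beta^2=x^2(u-v+2)^2\bigl[(u-x)^2-u(u-x)+1\bigr]=x^3(u-v+2)^2(v-u).
\]
This gives the degree, both roots and the constant at once.

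Equivalently, $\widetilde{\chi}_M(x,w)=x(w-x)\bigl(w^2-(v-2)w+1\bigr)$. Multiplicativity of the resultant, together with $\operatorname{Res}_w(w^2-aw+1,\,w^2-bw+1)=(a-b)^2$, then yields $x^2\cdot x(v-u)\cdot(v-2-u)^2$ directly.

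Completed this way, your approach is shorter than the paper's expansion. It also explains the two factors structurally: the root $w=x$ gives $(u-v)$, and the quadratic factor of $\widetilde{\chi}_M$ coinciding with the constraint at $u=v-2$ gives $(u-v+2)^2$. The paper's computation, in exchange, records the explicit palindromic expansion of $R$ in $x$.
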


\begin{proof}
Let $z$ and $z^{-1}$ be the roots of $w^2 - uw + 1 = 0$. By definition, the resultant evaluates to the product over these roots:
\begin{equation*}
R(x, u) = \chi_M(x, -z) \chi_M(x, -z^{-1}).
\end{equation*}
Because the roots map identically under the involution $z \mapsto z^{-1}$, the product expands into a polynomial in $x$ whose coefficients are symmetric Laurent polynomials in $z$. Substituting the fundamental invariant $u = z+z^{-1}$ translates the product into a polynomial in $x$ and $u$. Factoring out $x^3$ isolates the reciprocal variable $v = x+x^{-1}$, reducing the expression to a cubic in $v$. The full algebraic expansion and coefficient reduction verifying the factorization $-x^3(u-v)(u-v+2)^2$ are detailed in Appendix \ref{app:resultant}.
\end{proof}

\subsection{Chebyshev Factorization}

The determinant factorization (Lemma~\ref{lem:det_fact}) evaluates the characteristic polynomial at the nontrivial $(2n+1)$-th roots of unity, yielding Chebyshev nodes $v_r = 2\cos\left(\frac{2\pi r}{2n+1}\right)$.

\begin{lemma}[Chebyshev Roots]\label{lem:cheb_roots}
The characteristic Chebyshev combination 
\begin{equation}
F_n(u) = U_n\left(\frac{u}{2}\right) + U_{n-1}\left(\frac{u}{2}\right)
\end{equation}
is monic of degree $n$ and factors as $F_n(u) = \prod_{r=1}^n (u - v_r)$.
\end{lemma}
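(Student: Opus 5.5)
We work with the standard trigonometric form of the Chebyshev polynomials of the second kind. Put $u = 2\cos\theta$ with $\theta\in(0,\pi)$; then $U_m(u/2) = \sin((m+1)\theta)/\sin\theta$. The proof has three steps: compute the degree and leading coefficient, locate $n$ distinct zeros, and conclude by counting.

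\textbf{Degree and monicity.} We use the recurrence $U_{m+1}(y) = 2yU_m(y) - U_{m-1}(y)$ with $U_0=1$ and $U_1 = 2y$. By induction, $U_m(u/2)$ is a monic polynomial of degree $m$ in $u$ with integer coefficients. Adding $U_{n-1}(u/2)$, which has degree $n-1$, does not change the leading term. Hence $F_n(u)$ is monic of degree $n$.

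\textbf{Explicit zeros.} For $\sin\theta \neq 0$ we combine the two sines with a sum-to-product identity:
\begin{equation*}
F_n(2\cos\theta) = \frac{\sin((n+1)\theta) + \sin(n\theta)}{\sin\theta} = \frac{2\sin\!\big(\tfrac{(2n+1)\theta}{2}\big)\cos\!\big(\tfrac{\theta}{2}\big)}{\sin\theta} = \frac{\sin\!\big(\tfrac{(2n+1)\theta}{2}\big)}{\sin(\theta/2)},
\end{equation*}
using $\sin\theta = 2\sin(\theta/2)\cos(\theta/2)$. Now take $\theta_r = \frac{2\pi r}{2n+1}$ for $r = 1,\dots,n$. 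Each $\theta_r$ lies in $(0,\pi)$, so $\sin(\theta_r/2)\neq 0$ and the identity applies. The numerator is $\sin(\pi r) = 0$, so $F_n(v_r) = 0$ with $v_r = 2\cos\theta_r$. The $v_r$ are pairwise distinct because cosine is strictly decreasing on $(0,\pi)$.

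\textbf{Conclusion.} $F_n$ is monic of degree $n$ and has the $n$ distinct roots $v_1,\dots,v_n$. These account for all of its roots, so $F_n(u) = \prod_{r=1}^n (u - v_r)$.

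The only real obstacle is the sum-to-product simplification. One must check that the division by $\sin\theta$ is legitimate at every node $\theta_r$; the constraint $r \le n$, which gives $\theta_r < \pi$, is exactly what guarantees this.

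It also helps to link this to Lemma~\ref{lem:det_fact}. The nontrivial roots of unity $\zeta^{j}$ pair up as $\zeta^{\pm r}$, and $\zeta^r + \zeta^{-r} = v_r$ for $r=1,\dots,n$. This is why the resultant of Lemma~\ref{lem:resultant}, evaluated at $u = v_r$, reassembles into polynomials in $F_n$.
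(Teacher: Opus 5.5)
Your proof is correct and follows essentially the same route as the paper: monicity from the leading term of $U_n(u/2)$, the substitution $u=2\cos\theta$ with the sum-to-product identity, and zeros at $\theta_r = \frac{2\pi r}{2n+1}$ for $1\le r\le n$. You are a bit more careful than the paper, since you verify that the $v_r$ are distinct and close with the degree-counting argument. One small correction to your closing remark: dividing by $\sin\theta$ is also legitimate for $n<r\le 2n$, so the real role of $r\le n$ is to make the $v_r$ distinct, because $v_{2n+1-r}=v_r$.
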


\begin{proof}
The standard Chebyshev polynomial $U_k(x)$ has leading term $(2x)^k$. Therefore, $U_n(u/2)$ has leading term $u^n$, rendering $F_n(u)$ monic of degree $n$. Substituting $u = 2\cos\theta$, the trigonometric identity $U_k(\cos\theta) = \frac{\sin((k+1)\theta)}{\sin\theta}$ \cite{Mason2002} evaluates the characteristic combination as:
\begin{equation*}
F_n(2\cos\theta) = \frac{\sin((n+1)\theta) + \sin(n\theta)}{\sin\theta} = \frac{2\sin\left(\frac{(2n+1)\theta}{2}\right)\cos\left(\frac{\theta}{2}\right)}{\sin\theta}.
\end{equation*}
The roots in the domain $\theta \in (0, \pi)$ occur when the argument of the sine function evaluates to integer multiples of $\pi$, namely $\frac{(2n+1)\theta}{2} = r\pi$. This yields $\theta_r = \frac{2\pi r}{2n+1}$ for $1 \le r \le n$. As $u=2\cos\theta$, the $n$ distinct roots of $F_n(u)$ are $v_r$.
\end{proof}

\begin{theorem}[Spectral Factorization]\label{thm:spectral_fact}
The symmetric Laurent representative admits a factorization in $\mathbb{Z}[u]$. Defining $B_{2n+1}(u) = \nabla_{2n+1}(z)$, the polynomial evaluates to:
\begin{equation}
B_{2n+1}(u) = F_n(u)F_n(u+2)^2.
\end{equation}
\end{theorem}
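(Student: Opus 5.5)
The plan is to start from Lemma~\ref{lem:det_fact} at $t=-z$ and group the nontrivial roots of unity into conjugate pairs. By Lemma~\ref{lem:det_fact},
\[
\det(I-M(-z)^{2n+1})=\det(I-M(-z))\prod_{j=1}^{2n}\det(I-\zeta^{-j}M(-z)).
\]
By Lemma~\ref{lem:trivial_root} at $t=-z$, the $j=0$ factor equals $-z^{-1}(1-z+z^2-z^3)$. It cancels the denominator in Definition~\ref{def:recip_poly} and leaves
\[
A_{2n+1}(z)=z^{2n}\prod_{j=1}^{2n}\det(I-\zeta^{-j}M(-z)).
\]
Each factor is $\det(I-y M)=y^3\chi_M(y^{-1},-z)$ with $y=\zeta^{-j}$. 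Pairing $j$ with $2n+1-j$, i.e.\ $y$ with $\bar y=y^{-1}$, gives for $x=\zeta^{j}$
\[
\det(I-xM)\det(I-x^{-1}M)=\chi_M(x,-z)\,\chi_M(x^{-1},-z).
\]
Lemma~\ref{lem:reciprocity} turns the second factor into $-zx^{-3}\chi_M(x,-z^{-1})$. Writing $\widetilde\chi_M(x,w)=w\chi_M(x,-w)$, the pair product becomes
\[
-z\,x^{-3}\,\chi_M(x,-z)\chi_M(x,-z^{-1})=-x^{-3}\,\widetilde\chi_M(x,z)\widetilde\chi_M(x,z^{-1})=-x^{-3}R(x,u),
\]
with $u=z+z^{-1}$. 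The factor $z\cdot z^{-1}$ absorbed into $\widetilde\chi_M$ must be tracked against the bare $z$ carried by the reciprocity relation.

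Next I apply Lemma~\ref{lem:resultant}. Since $w^2-uw+1$ is monic with roots $z^{\pm1}$, $R(x,u)$ is exactly this product, so each pair contributes
\[
-x^{-3}\cdot\bigl(-x^3(u-v_r)(u-v_r+2)^2\bigr)=(u-v_r)(u+2-v_r)^2 .
\]
Here $v_r=\zeta^r+\zeta^{-r}=2\cos\bigl(\tfrac{2\pi r}{2n+1}\bigr)$, $r=1,\dots,n$. Up to the bookkeeping in $z$, multiplying over $r$ and using Lemma~\ref{lem:cheb_roots} gives $\prod_r (u-v_r)=F_n(u)$ and $\prod_r (u+2-v_r)=F_n(u+2)$. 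Hence $\prod_{j\ne0}\det(I-\zeta^{-j}M(-z))$ equals a monomial in $z$ times $F_n(u)F_n(u+2)^2$.

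The step I expect to be the main obstacle is the power of $z$. I will recompute carefully how many factors of $z$ each pair produces, since the resultant was taken with $\widetilde\chi_M=w\chi_M$. I then check that $z^{2n}$ times this monomial equals $z^{3n}$, which is what $\nabla_{2n+1}=z^{-3n}A_{2n+1}$ requires. I will confirm the exponent and sign by an independent degree count. $F_n(u)F_n(u+2)^2$ is a polynomial of degree $3n$ in $u$, so as a Laurent polynomial in $z$ it spans $z^{-3n}$ to $z^{3n}$. The leading coefficient of $A_{2n+1}$, obtained from the top-degree behavior of $\det(I-M(-z)^{2n+1})$ via the dominant eigenvalue growth of $M(-z)$, must match. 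As a sanity check I will also compare the case $n=1$ against direct computation, using the initial values recorded in Appendix~\ref{app:gf}. If the pair products instead carry an extra $z^{\pm1}$, the normalization exponent absorbs it, and the sign is fixed by the leading-coefficient check.

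Finally, membership in $\mathbb{Z}[u]$ follows because $F_n$ has integer coefficients, being a sum of Chebyshev $U$ polynomials at $u/2$.
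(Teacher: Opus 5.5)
Your overall route is the paper's: cancel the $j=0$ factor, pair $\zeta^{r}$ with $\zeta^{-r}$, apply Lemma~\ref{lem:reciprocity}, then Lemma~\ref{lem:resultant} and Lemma~\ref{lem:cheb_roots}. Cancelling $x^{3}x^{-3}$ inside each pair, instead of using $\prod_{j=1}^{2n}\zeta^{-3j}=1$ globally as the paper does, is a harmless variation. The gap is the power of $z$. You flag it but do not settle it, and your displayed computation of it is wrong. The identity $-zx^{-3}\chi_M(x,-z)\chi_M(x,-z^{-1})=-x^{-3}\widetilde\chi_M(x,z)\widetilde\chi_M(x,z^{-1})$ is false. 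Since $\widetilde\chi_M(x,z)\widetilde\chi_M(x,z^{-1})=z\cdot z^{-1}\,\chi_M(x,-z)\chi_M(x,-z^{-1})$, the resultant $R(x,u)$ equals the product of the two $\chi_M$'s exactly, so the bare $z$ from reciprocity survives. Each pair therefore contributes $-z\zeta^{-3r}R(\zeta^r,u)=z\,(u-v_r)(u-v_r+2)^2$, not $(u-v_r)(u-v_r+2)^2$.

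This is not cosmetic. With your per-pair value you would get $A_{2n+1}(z)=z^{2n}F_n(u)F_n(u+2)^2$, i.e.\ $\nabla_{2n+1}(z)=z^{-n}F_n(u)F_n(u+2)^2$. That is not even invariant under $z\mapsto z^{-1}$. The content of the theorem is precisely that the exponent comes out to exactly $3n$, so that $z^{-3n}A_{2n+1}$ is a function of $u$.

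Your fallbacks do not close this gap:
\begin{enumerate}
\item The normalization $\nabla_{2n+1}=z^{-3n}A_{2n+1}$ is fixed by Definition~\ref{def:recip_poly}. Nothing can ``absorb'' a stray $z^{\pm1}$; it would simply falsify the statement.
\item A degree or leading-coefficient count needs an independent proof that $A_{2n+1}$ runs from $z^{0}$ to $z^{6n}$. You have not supplied one, and the paper obtains this only as a consequence of this theorem (Proposition~\ref{prop:normalization}), so invoking it would be circular.
\item The $n=1$ check covers a single case.
\end{enumerate}

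The repair is just to carry the $z$. The $n$ pairs give $z^{n}$, and $z^{2n}\cdot z^{n}=z^{3n}$ matches the normalization exactly, with sign $(-zx^{-3})(-x^{3})=+z$. With that correction your argument coincides with the paper's proof.
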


\begin{proof}
Evaluating the unnormalized determinant identity defined in Definition \ref{def:recip_poly} yields:
\begin{equation*}
A_{2n+1}(z) = \frac{-z^{2n+1} \det(I - M(-z)^{2n+1})}{1-z+z^2-z^3}.
\end{equation*}
By Lemma~\ref{lem:trivial_root}, the trivial root corresponding to $j=0$ evaluates to $\det(I-M(-z)) = -z^{-1}(1-z+z^2-z^3)$. Canceling this from the product expansion in Lemma~\ref{lem:det_fact} isolates the contribution of the non-trivial roots:
\begin{equation*}
A_{2n+1}(z) = z^{2n} \prod_{j=1}^{2n} \det(I - \zeta^{-j}M(-z)).
\end{equation*}
By the definition of the characteristic polynomial, $\det(cI - M) = c^3\chi_M(c^{-1})$. Setting $c = \zeta^{-j}$, the determinant evaluates to $\det(\zeta^{-j}I - M(-z)) = \zeta^{-3j}\chi_M(\zeta^j, -z)$. Since $\det(I - \zeta^{-j}M) = \det(\zeta^{-j}(\zeta^j I - M))$, this yields:
\begin{equation*}
\det(I - \zeta^{-j}M(-z)) = \zeta^{-3j}\chi_M(\zeta^j, -z).
\end{equation*}
The summation of the indices in the prefactor evaluates to $\sum_{j=1}^{2n} -3j = -3n(2n+1)$. Since $\zeta$ is a $(2n+1)$-th root of unity, $\zeta^{-3n(2n+1)} = (\zeta^{2n+1})^{-3n} = 1$. Consequently, the product of the non-trivial roots reduces exactly to the product of the characteristic polynomials:
\begin{equation*}
A_{2n+1}(z) = z^{2n} \prod_{j=1}^{2n} \chi_M(\zeta^j, -z).
\end{equation*}
We partition the $2n$ roots of unity into $n$ reciprocal pairs $(\zeta^r, \zeta^{-r})$ for $1 \le r \le n$. The sequence decomposes as:
\begin{equation*}
A_{2n+1}(z) = z^{2n} \prod_{r=1}^{n} \Big( \chi_M(\zeta^r, -z) \chi_M(\zeta^{-r}, -z) \Big).
\end{equation*}
Applying the reciprocity identity (Lemma~\ref{lem:reciprocity}) to the second factor, we substitute \newline
$\chi_M(\zeta^{-r}, -z) = -z\zeta^{-3r}\chi_M(\zeta^r, -z^{-1})$. The product for a single pair index $r$ becomes:
\begin{equation*}
\chi_M(\zeta^r, -z) \Big( -z\zeta^{-3r}\chi_M(\zeta^r, -z^{-1}) \Big) = -z\zeta^{-3r} \Big[ \chi_M(\zeta^r, -z)\chi_M(\zeta^r, -z^{-1}) \Big].
\end{equation*}
By Lemma~\ref{lem:resultant}, the bracketed term evaluates to the resultant $R(x, u)$ evaluated at $x = \zeta^r$ and $u = z + z^{-1}$. Substituting $R(\zeta^r, u) = -(\zeta^r)^3 (u - v_r)(u - v_r + 2)^2$, where $v_r = \zeta^r + \zeta^{-r}$, yields:
\begin{equation*}
-z\zeta^{-3r} \Big[ -\zeta^{3r} (u - v_r)(u - v_r + 2)^2 \Big] = z(u - v_r)(u - v_r + 2)^2.
\end{equation*}
Taking the product over all $n$ reciprocal pairs incorporates the $z^{2n}$ prefactor:
\begin{equation*}
A_{2n+1}(z) = z^{2n} \prod_{r=1}^n z(u - v_r)(u - v_r + 2)^2 = z^{3n} \prod_{r=1}^n (u - v_r)(u - v_r + 2)^2.
\end{equation*}
Dividing both sides by $z^{3n}$ produces the symmetric Laurent representative $\nabla_{2n+1}(z)$:
\begin{equation*}
\nabla_{2n+1}(z) = z^{-3n} A_{2n+1}(z) = \prod_{r=1}^n (u - v_r)(u - v_r + 2)^2.
\end{equation*}
The right-hand side is a polynomial in $u = z+z^{-1}$. Let $P(u) = \prod_{r=1}^n (u - v_r)(u - v_r + 2)^2$. Because $(u - v_r)$ is monic of degree 1 and $(u - v_r + 2)^2$ is monic of degree 2, their product over $n$ iterations ensures $P(u)$ is identically monic of degree $3n$. By Lemma~\ref{lem:cheb_roots}, the Chebyshev polynomial $F_n(u)$ is monic of degree $n$ with exact roots $v_r$. Thus, the polynomial $F_n(u)F_n(u+2)^2$ is monic of degree $3n$ and shares the root factorization of $P(u)$, establishing $B_{2n+1}(u) = F_n(u)F_n(u+2)^2$.
\end{proof}

\begin{remark}
The algebraic factorization derived in Theorem~\ref{thm:spectral_fact} has been explicitly verified via direct determinant expansion for $n \le 5$, confirming the exact correspondence of the normalization constants and polynomial degrees.
\end{remark}

\begin{proposition}[Normalization and Reciprocity]\label{prop:normalization}
The symmetric Laurent representative \newline $\nabla_{2n+1}(z)$ is reciprocal, satisfying $\nabla_{2n+1}(z) = \nabla_{2n+1}(z^{-1})$. Consequently, the normalized ordinary polynomial $A_{2n+1}(z)$ is a monic polynomial of degree $6n$.
\end{proposition}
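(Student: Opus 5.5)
The plan is to read both claims directly off Theorem~\ref{thm:spectral_fact}. That theorem gives $\nabla_{2n+1}(z) = B_{2n+1}(u)$ with $u = z + z^{-1}$, and $B_{2n+1}(u) = F_n(u)F_n(u+2)^2$ is a polynomial in $u$ alone. The substitution $z \mapsto z^{-1}$ fixes $u$, so $\nabla_{2n+1}(z^{-1}) = B_{2n+1}(u) = \nabla_{2n+1}(z)$ as Laurent polynomials, which is the reciprocity statement. I would stress that this is an identity of Laurent polynomials, not just of functions: the product formula in the proof of Theorem~\ref{thm:spectral_fact} is an identity in $z$ holding for all $z \neq 0$, so the coefficients are determined.

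For the second claim I would expand $B_{2n+1}$ in the basis $z^k + z^{-k}$. Write $B_{2n+1}(u) = \sum_{k=0}^{3n} b_k u^k$ with $b_{3n} = 1$, which holds because $F_n$ is monic of degree $n$ by Lemma~\ref{lem:cheb_roots} and so $F_n(u)F_n(u+2)^2$ is monic of degree $3n$. Each power $u^k = (z+z^{-1})^k$ is a Laurent polynomial supported in degrees $-k,\dots,k$, with extreme coefficients $1$ at $z^{\pm k}$. Hence $\nabla_{2n+1}(z)$ is supported in $[-3n, 3n]$, and its coefficients at $z^{3n}$ and $z^{-3n}$ both equal $b_{3n} = 1$; only the $k = 3n$ term reaches these extreme degrees.

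Multiplying by $z^{3n}$, Definition~\ref{def:recip_poly} gives $A_{2n+1}(z) = z^{3n}\nabla_{2n+1}(z)$. This is an ordinary polynomial supported in degrees $0,\dots,6n$, with leading coefficient $1$ at $z^{6n}$ and constant term $1$. So it is monic of degree exactly $6n$, and it is palindromic, $z^{6n}A_{2n+1}(z^{-1}) = A_{2n+1}(z)$, as reciprocity requires.

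I expect the main obstacle to be consistency with the normalization in Definition~\ref{def:recip_poly}. Specifically, one must check that the expression $-z^{2n+1}\det(I-M(-z)^{2n+1})/(1-z+z^2-z^3)$ is in fact a polynomial and not merely a Laurent polynomial. The proof of Theorem~\ref{thm:spectral_fact} already establishes $A_{2n+1}(z) = z^{3n}\prod_r(u-v_r)(u-v_r+2)^2$, and the support argument above then shows there are no negative powers. So I would cite that identity rather than re-derive it, and note that the sign $-1$ in the definition is precisely what cancels the $-z^{-1}$ from Lemma~\ref{lem:trivial_root}, making the leading coefficient $+1$ rather than $-1$.
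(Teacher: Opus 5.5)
Your proposal is correct and follows the same route as the paper. Both arguments get reciprocity from $\nabla_{2n+1}(z)=B_{2n+1}(z+z^{-1})$ being a polynomial in the inversion-invariant $u=z+z^{-1}$, and get monicity and degree $6n$ from $B_{2n+1}(u)=F_n(u)F_n(u+2)^2$ being monic of degree $3n$ (Theorem~\ref{thm:spectral_fact}), followed by multiplication by $z^{3n}$. Your added points --- that only the $u^{3n}$ term reaches degrees $\pm 3n$, and that the sign in Definition~\ref{def:recip_poly} cancels the $-z^{-1}$ from Lemma~\ref{lem:trivial_root} --- spell out details the paper leaves implicit without changing the argument.
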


\begin{proof}
In Theorem~\ref{thm:spectral_fact}, we defined $B_{2n+1}(u)$ as the monic polynomial $F_n(u)F_n(u+2)^2$ of degree $3n$. Its leading term expands as $u^{3n} = (z+z^{-1})^{3n} = z^{3n} + \dots + z^{-3n}$. By definition, $\nabla_{2n+1}(z) = B_{2n+1}(z+z^{-1})$. Since the sequence evaluates to a polynomial in the symmetric invariant $u = z+z^{-1}$, it satisfies $\nabla_{2n+1}(z) = \nabla_{2n+1}(z^{-1})$. Consequently, $\nabla_{2n+1}(z)$ has maximal degree $3n$ and minimal degree $-3n$, with leading coefficient 1. Multiplying by $z^{3n}$ gives the normalized ordinary polynomial $A_{2n+1}(z) = z^{3n}\nabla_{2n+1}(z)$, which shifts the minimal degree to $0$ and the maximal degree to $6n$. Thus, $A_{2n+1}(z)$ is a monic polynomial of degree $6n$.
\end{proof}

\begin{lemma}[Chebyshev Identity]\label{lem:cheb_id}
The evaluated Chebyshev product satisfies:
\[
z^n F_n(z+z^{-1}) = 1 + z + \dots + z^{2n}.
\]
\end{lemma}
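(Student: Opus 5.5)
The plan is to prove the identity $z^nF_n(z+z^{-1}) = 1+z+\dots+z^{2n}$ by induction on $n$, using the three-term Chebyshev recurrence. A direct root comparison, in the spirit of the proof of Theorem~\ref{thm:spectral_fact}, serves as a fallback.

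\textbf{Step 1: recurrence for $F_n$.} Write $U_k$ for $U_k(u/2)$, so that $U_{k+1} = uU_k - U_{k-1}$. Since $F_n(u) = U_n + U_{n-1}$ is a sum of two consecutive Chebyshev terms, it inherits the same recurrence:
\[
F_{n+1}(u) = uF_n(u) - F_{n-1}(u).
\]
The initial values are $F_0 = U_0 + U_{-1} = 1$ (using the convention $U_{-1}=0$) and $F_1 = u + 1$.

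\textbf{Step 2: matching recurrence for the right-hand side.} Set $u = z+z^{-1}$ and define $G_n(z) = z^{-n}(1+z+\dots+z^{2n}) = \sum_{k=-n}^{n} z^k$. The base cases hold: $G_0 = 1 = F_0$ and $G_1 = z^{-1}+1+z = F_1(u)$. For the inductive step we need
\[
(z+z^{-1})G_n = G_{n+1} + G_{n-1}.
\]
This follows by shifting the symmetric sum. Multiplying $\sum_{k=-n}^n z^k$ by $z$ gives $\sum_{k=-n+1}^{n+1} z^k$, and multiplying by $z^{-1}$ gives $\sum_{k=-n-1}^{n-1} z^k$. Adding these two sums produces every exponent from $-n-1$ to $n+1$ once, and every exponent from $-n+1$ to $n-1$ a second time, which is exactly $G_{n+1}+G_{n-1}$.

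\textbf{Step 3: conclusion.} The sequences $F_n(z+z^{-1})$ and $G_n(z)$ satisfy the same second-order recurrence with the same two initial values, so they agree for all $n$. Multiplying by $z^n$ gives the identity in the statement.

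\textbf{Fallback via roots.} By Lemma~\ref{lem:cheb_roots}, we have $z^nF_n(z+z^{-1}) = \prod_{r=1}^n (z^2 - v_r z + 1)$. Each quadratic factor has roots $\zeta^{\pm r}$, so the product runs over all nontrivial $(2n+1)$-th roots of unity. That product is $(z^{2n+1}-1)/(z-1)$, and both sides are monic of degree $2n$, so they coincide.

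The only real obstacle is bookkeeping: checking the convention $U_{-1}=0$, and making sure the exponent ranges in the symmetric-sum step overlap exactly as claimed.
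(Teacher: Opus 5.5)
Your argument is correct, but it takes a different route from the paper.

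The paper substitutes $z=e^{i\theta}$ into the closed form $U_k(\cos\theta)=\sin((k+1)\theta)/\sin\theta$, already used in Lemma~\ref{lem:cheb_roots}. It rewrites $F_n(z+z^{-1})$ as $(z^{n+1}-z^{-(n+1)}+z^n-z^{-n})/(z-z^{-1})$, then clears denominators and cancels a factor $z+1$ to reach $(z^{2n+1}-1)/(z-1)$.

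You instead induct on the recurrence $F_{n+1}=uF_n-F_{n-1}$, which the paper only uses later, for $D_n$. You match it against the shifted-sum identity $(z+z^{-1})G_n=G_{n+1}+G_{n-1}$. Your base cases $F_0=1$, $F_1=u+1$ and your exponent bookkeeping are right.

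The paper's route is shorter once the trigonometric formula is available. However, it passes through $\sin\theta$ and divides by $z-z^{-1}$ and $z+1$. Strictly, it therefore establishes the identity only where those quantities are nonzero, and only on the unit circle if $\theta$ is read as real. It then tacitly relies on the fact that a Laurent-polynomial identity holding at infinitely many points holds identically. Your induction stays inside $\mathbb{Z}[z,z^{-1}]$ from the start. It needs only the convention $U_{-1}=0$, or alternatively base cases $n=1,2$.

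Your fallback is also sound, and is arguably the most conceptual of the three arguments. It shows directly that the $n$ factors $u-v_r$ reassemble into the $2n$ nontrivial $(2n+1)$-th roots of unity. That is exactly the mechanism behind Lemma~\ref{lem:det_fact} and Theorem~\ref{thm:spectral_fact}. In that argument, $\prod_{r=1}^n(z-\zeta^r)(z-\zeta^{-r})=(z^{2n+1}-1)/(z-1)$ is already an exact equality, so your closing appeal to monicity and degree is unnecessary.
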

\begin{proof}
Applying the trigonometric formulation from Lemma~\ref{lem:cheb_roots} with $2\cos\theta = z+z^{-1}$ gives:
\[
F_n(z+z^{-1}) = \frac{\sin((n+1)\theta) + \sin(n\theta)}{\sin\theta} = \frac{z^{n+1}-z^{-(n+1)} + z^n - z^{-n}}{z-z^{-1}}.
\]
Multiplying by $z^n$ clears the negative indices. Multiplying the numerator and denominator by $z$ yields:
\[
z^n F_n(z+z^{-1}) = \frac{z^{2n+2} - 1 + z^{2n+1} - z}{z^2-1} = \frac{z^{2n+1}(z+1) - (z+1)}{(z-1)(z+1)}.
\]
Canceling the common factor $(z+1)$ establishes $\frac{z^{2n+1}-1}{z-1}$, which evaluates identically to the geometric sum $1 + z + \dots + z^{2n}$.
\end{proof}

\begin{proposition}\label{prop:geom}
Define the trigonometric product $D_n(z)$ by:
\begin{equation}
D_n(z) = \prod_{r=1}^n \left(z^2 + 4\sin^2\left(\frac{\pi r}{2n+1}\right)z + 1\right).
\end{equation}
The symmetric ordinary polynomial satisfies the relation $A_{2n+1}(z) = (1+z+\cdots+z^{2n})D_n(z)^2$, and $D_n(z)$ possesses positive real coefficients.
\end{proposition}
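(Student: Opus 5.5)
The plan is to start from Theorem~\ref{thm:spectral_fact} and Proposition~\ref{prop:normalization}. Together they give
\[
A_{2n+1}(z) = z^{3n} B_{2n+1}(z+z^{-1}) = \bigl(z^n F_n(z+z^{-1})\bigr)\bigl(z^n F_n(z+z^{-1}+2)\bigr)^2 .
\]
By Lemma~\ref{lem:cheb_id}, the first factor is exactly $1+z+\cdots+z^{2n}$. What remains is to identify $z^n F_n(z+z^{-1}+2)$ with $D_n(z)$.

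For that identification I would use the root factorization of Lemma~\ref{lem:cheb_roots}:
\[
z^nF_n(u+2) = \prod_{r=1}^n z\,(u+2-v_r), \qquad u = z+z^{-1},\quad v_r = 2\cos\theta_r,\quad \theta_r = \frac{2\pi r}{2n+1}.
\]
Each factor expands as
\[
z(z+z^{-1}+2-2\cos\theta_r) = z^2 + (2-2\cos\theta_r)z + 1 .
\]
The half-angle identity $2-2\cos\theta_r = 4\sin^2(\theta_r/2) = 4\sin^2\!\left(\frac{\pi r}{2n+1}\right)$ turns this into the factor appearing in $D_n(z)$. Multiplying over $r$ gives $z^nF_n(u+2)=D_n(z)$, and hence $A_{2n+1}(z)=(1+z+\cdots+z^{2n})D_n(z)^2$.

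Positivity of the coefficients is then immediate. Each quadratic factor $z^2 + 4\sin^2(\pi r/(2n+1))z + 1$ has coefficients $1$, $4\sin^2(\pi r/(2n+1))$, $1$. For $1\le r\le n$ the angle $\pi r/(2n+1)$ lies in $(0,\pi/2)$, so the middle coefficient is strictly positive. A product of polynomials with positive coefficients has positive coefficients, since each coefficient of the product is a sum of products of positive numbers with no cancellation. So $D_n(z)$ has positive real coefficients.

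I do not expect a serious obstacle here. The one point needing care is bookkeeping of the $z$-powers: the $z^{3n}$ must split as $z^n\cdot(z^n)^2$ so that each Chebyshev factor is cleared separately. I will also note that $D_n(z)=z^nF_n(z+z^{-1}+2)$ has integer coefficients, because $F_n\in\mathbb{Z}[u]$. This is consistent with, and strengthens, the claim that its coefficients are real.
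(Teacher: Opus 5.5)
Your proposal is correct and follows the paper's proof essentially step for step. You use Theorem~\ref{thm:spectral_fact} together with Lemma~\ref{lem:cheb_id} for the geometric factor, identify $D_n(z)$ through the root factorization of $F_n(u+2)$ with $z(z+z^{-1}+2-v_r)=z^2+4\sin^2(\pi r/(2n+1))z+1$, and get positivity from the factorwise positive coefficients. Splitting $z^{3n}$ as $z^n(z^n)^2$ at the start (instead of cancelling $z^{-2n}$ afterwards, as the paper does) and noting $D_n\in\mathbb{Z}[z]$ are cosmetic refinements.
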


\begin{proof}
From Theorem~\ref{thm:spectral_fact}, $B_{2n+1}(u) = F_n(u)F_n(u+2)^2$. Reversing the transformation yields $A_{2n+1}(z) = z^{3n} F_n(z+z^{-1}) F_n(z+z^{-1}+2)^2$. Applying Lemma~\ref{lem:cheb_id} establishes $z^n F_n(z+z^{-1}) = (1 + \dots + z^{2n})$. The shifted factors expand as:
\begin{equation*}
z+z^{-1} - v_r + 2 = z^{-1}\big(z^2 + (2-v_r)z + 1\big).
\end{equation*}
Using the half-angle formula $2 - 2\cos(\theta_r) = 4\sin^2(\theta_r/2)$, we evaluate the full product:
\begin{align*}
F_n(z+z^{-1}+2)^2 &= z^{-2n} \prod_{r=1}^n \left(z^2 + 4\sin^2\left(\frac{\pi r}{2n+1}\right)z + 1\right)^2 \\
&= z^{-2n} D_n(z)^2.
\end{align*}
Multiplying these components and canceling the fractional powers of $z$ gives the polynomial identity. Because $0 < \frac{\pi r}{2n+1} < \pi/2$, the scalar $4\sin^2\left(\frac{\pi r}{2n+1}\right)$ is positive. Consequently, each quadratic factor has positive real coefficients. Because the product of positive-coefficient polynomials retains positive coefficients, $D_n(z)$ is positive.
\end{proof}

By the Keilson-Gerber theorem \cite{Keilson1971}, the discrete convolution of log-concave sequences without internal zeros preserves log-concavity. The geometric sequence $1+z+\dots+z^{2n}$ is flat and trivially log-concave. Thus, if $D_n(z)$ is log-concave, the product $D_n(z)^2(1+z+\dots+z^{2n})$ retains structural log-concavity. This identifies the coefficient sequence of $D_n(z)$ as the central remaining obstacle in proving log-concavity.

\section{Combinatorial Expansion and Hypergeometric Reduction} \label{sec:HyperGeomRed}

Let $D_n(z) = \sum_{k=0}^{2n} a_k^{(n)} z^k$. To analyze the coefficients explicitly, we derive a combinatorial formula for the sequence $a_k^{(n)}$. 

\begin{lemma}[Polynomial Recurrence]
For $n \ge 2$, the polynomials $D_n(z)$ satisfy the three-term linear recurrence:
\begin{equation}
D_n(z) = (z+1)^2 D_{n-1}(z) - z^2 D_{n-2}(z)
\end{equation}
with initial conditions $D_0(z) = 1$ and $D_1(z) = z^2+3z+1$.
\end{lemma}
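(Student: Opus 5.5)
The plan is to transfer the statement to the Chebyshev side, where the recurrence is the standard three-term recurrence for Chebyshev polynomials. From the proof of Proposition~\ref{prop:geom} we have $F_n(z+z^{-1}+2) = z^{-n}D_n(z)$, and so
\[
D_n(z) = z^n F_n(w), \qquad w = z+z^{-1}+2 = z^{-1}(z+1)^2 .
\]
This identity is really an identity of monic polynomials: both sides, as polynomials in $w$, have the roots $2-v_r$ after the substitution. Since it holds for $n\ge1$, and for $n=0$ with $F_0 = U_0 = 1$ (taking $U_{-1}=0$), it suffices to establish a recurrence for $F_n$ and then multiply through by $z^n$.

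\textbf{Step 1 (recurrence for $F_n$).} Write $s = u/2$. The Chebyshev polynomials satisfy $U_{k}(s) = 2s\,U_{k-1}(s) - U_{k-2}(s)$, which here reads $U_k(u/2) = u\,U_{k-1}(u/2) - U_{k-2}(u/2)$. The combination $F_n = U_n + U_{n-1}$ is a sum of two solutions of this same linear recurrence with constant coefficients in $n$. Hence
\[
F_n(u) = u F_{n-1}(u) - F_{n-2}(u), \qquad n\ge 2 .
\]
The initial values are $F_0 = 1$ and $F_1(u) = u+1$.

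\textbf{Step 2 (substitution and scaling).} Set $u = w = z^{-1}(z+1)^2$ and multiply the recurrence by $z^n$. This gives
\[
z^nF_n(w) = (z+1)^2\, z^{n-1}F_{n-1}(w) - z^2\, z^{n-2}F_{n-2}(w),
\]
which is exactly $D_n = (z+1)^2 D_{n-1} - z^2 D_{n-2}$.

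\textbf{Step 3 (initial conditions).} For $n=0$ the product defining $D_0$ is empty, so $D_0 = 1$. For $n=1$ we have $4\sin^2(\pi/3) = 3$, so $D_1(z) = z^2+3z+1$. As a cross-check, $z(w+1) = (z+1)^2 + z$ gives the same polynomial.

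The main point requiring care is justifying $D_n(z) = z^nF_n(z+z^{-1}+2)$ for every $n\ge0$, including the edge case $n=0$ and the convention $U_{-1}=0$. Beyond that, the argument reduces to the fact that $U_n$ and $U_{n-1}$ obey the same three-term recurrence, so I expect no genuine obstacle.
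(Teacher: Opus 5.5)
Your proposal is correct and follows the paper's proof: you derive $F_n(u)=uF_{n-1}(u)-F_{n-2}(u)$ from the Chebyshev recurrence, substitute $u=z+z^{-1}+2$ using $D_n(z)=z^nF_n(z+z^{-1}+2)$ from Proposition~\ref{prop:geom}, and multiply by $z^n$, and your treatment of $n=0$ via $U_{-1}=0$ and your check of $D_1$ make explicit what the paper leaves implicit. One small slip in your aside: as a polynomial in $w$, $F_n(w)$ has roots $v_r$, not $2-v_r$; the numbers $2-v_r=4\sin^2\left(\frac{\pi r}{2n+1}\right)$ are instead the middle coefficients of the quadratic factors of $D_n$.
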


\begin{proof}
The Chebyshev polynomials of the second kind satisfy $U_n(x) = 2x U_{n-1}(x) - U_{n-2}(x)$. Evaluating this at $x = u/2$ implies $F_n(u) = u F_{n-1}(u) - F_{n-2}(u)$. From Proposition \ref{prop:geom}, $z^{-n}D_n(z) = F_n(z+z^{-1}+2)$. Substituting $u = z+z^{-1}+2$ into the recurrence gives:
\begin{equation*}
z^{-n}D_n(z) = (z+z^{-1}+2) z^{-(n-1)}D_{n-1}(z) - z^{-(n-2)}D_{n-2}(z).
\end{equation*}
Multiplying the equation by $z^n$ clears the negative indices. The leading coefficient simplifies as $z(z+z^{-1}+2) = z^2+2z+1 = (z+1)^2$. The second term evaluates to $z^2(z^{-(n-2)}z^n)D_{n-2}(z) = z^2 D_{n-2}(z)$. This yields the stated recurrence $D_n(z) = (z+1)^2 D_{n-1}(z) - z^2 D_{n-2}(z)$.
\end{proof}

\subsection{Rational Generating Function for \texorpdfstring{$D_n(z)$}{Dn(z)}}

Deriving constraints on the discrete log-concavity Wronskian $\Delta_{n,k} = (a_k^{(n)})^2 - a_{k-1}^{(n)}a_{k+1}^{(n)} > 0$ via induction is obstructed by the algebraic cross-terms generated by the subtraction term $-z^2 D_{n-2}(z)$. To extract the coefficients independently of recursive subtraction, we derive the generating function for the trigonometric product sequence.

\begin{proposition}\label{prop:gen_func2}
Let $\mathcal{G}(z,w) = \sum_{n=0}^\infty D_n(z) w^n$. The generating function evaluates to the rational function:
\begin{equation}
\mathcal{G}(z,w) = \frac{1+zw}{1 - w(z+1)^2 + z^2w^2}.
\end{equation}
\end{proposition}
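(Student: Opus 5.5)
The plan is to derive the generating function directly from the three-term recurrence of the preceding Lemma, $D_n(z) = (z+1)^2 D_{n-1}(z) - z^2 D_{n-2}(z)$ for $n\ge 2$, with $D_0(z)=1$ and $D_1(z)=z^2+3z+1$. Working in the formal power series ring $\mathbb{Z}[z][[w]]$, I will multiply $\mathcal{G}(z,w)$ by the characteristic denominator $1 - (z+1)^2 w + z^2 w^2$ and compare coefficients of $w^n$.

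For $n \ge 2$, the coefficient of $w^n$ in the product is
\begin{equation*}
D_n(z) - (z+1)^2 D_{n-1}(z) + z^2 D_{n-2}(z),
\end{equation*}
which vanishes by the recurrence. Only the coefficients of $w^0$ and $w^1$ survive. The $w^0$ coefficient is $D_0(z)=1$. The $w^1$ coefficient is
\begin{equation*}
D_1(z) - (z+1)^2 D_0(z) = (z^2+3z+1) - (z^2+2z+1) = z.
\end{equation*}
The product therefore equals $1 + zw$.

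The denominator has constant term $1$ in $w$, so it is invertible in $\mathbb{Z}[z][[w]]$. Dividing gives the stated rational form.

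As a consistency check, $D_0=1$ agrees with the empty product in Proposition~\ref{prop:geom}, and $D_1(z) = z^2 + 4\sin^2(\pi/3)z + 1 = z^2+3z+1$ also agrees with it. One could alternatively reach the same formula through the classical Chebyshev generating function $\sum U_n(x)w^n = (1-2xw+w^2)^{-1}$, substituting $u=z+z^{-1}+2$ and rescaling $w\mapsto zw$. That route is unnecessary here.

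The only step needing care is confirming the initial conditions against the trigonometric product definition. It has no real obstacle: the argument is a routine coefficient comparison.
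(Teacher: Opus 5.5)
Your proof is correct and follows essentially the same route as the paper: both derive the result from the three-term recurrence $D_n(z) = (z+1)^2 D_{n-1}(z) - z^2 D_{n-2}(z)$ with $D_0=1$ and $D_1=z^2+3z+1$. Your coefficient comparison for $(1-(z+1)^2w+z^2w^2)\,\mathcal{G}(z,w)$ is the paper's ``multiply by $w^n$, sum over $n\ge 2$, isolate the initial terms'' computation run in the other direction, and your remark that the denominator is invertible in $\mathbb{Z}[z][[w]]$ makes explicit a point the paper leaves implicit.
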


\begin{proof}
Multiplying the three-term polynomial recurrence by $w^n$ and summing over $n \ge 2$ gives:
\begin{equation*}
\sum_{n=2}^\infty D_n(z)w^n = (z+1)^2 w \sum_{n=2}^\infty D_{n-1}(z)w^{n-1} - z^2 w^2 \sum_{n=2}^\infty D_{n-2}(z)w^{n-2}.
\end{equation*}
We express the summations in terms of the complete generating function $\mathcal{G}(z,w)$ by explicitly shifting the indices and isolating the initial boundary components:
\begin{equation*}
\big(\mathcal{G}(z,w) - D_0(z) - D_1(z)w\big) = (z+1)^2 w \big(\mathcal{G}(z,w) - D_0(z)\big) - z^2 w^2 \mathcal{G}(z,w).
\end{equation*}
Substituting the initial boundary polynomials $D_0(z) = 1$ and $D_1(z) = z^2+3z+1$ maps the equation to:
\begin{equation*}
\mathcal{G}(z,w) - 1 - (z^2+3z+1)w = (z+1)^2 w \mathcal{G}(z,w) - (z^2+2z+1)w - z^2 w^2 \mathcal{G}(z,w).
\end{equation*}
Collecting the terms proportional to $\mathcal{G}(z,w)$ on the left-hand side and simplifying the remaining linear terms on the right-hand side produces $\mathcal{G}(z,w)\big(1 - w(z+1)^2 + z^2w^2\big) = 1 + zw$. Dividing by the characteristic polynomial denominator concludes the derivation.
\end{proof}

\subsection{Binomial Convolution Formula}

We isolate the coefficient sequence $a_k^{(n)} = [z^k w^n] \mathcal{G}(z,w)$. Expanding the rational generating function yields the explicit coefficient formula.

\begin{theorem}\label{thm:binomial_sum}
The polynomial $D_n(z)$ is symmetric. For the index half-range $0 \le k \le n$, the coefficients satisfy the discrete binomial convolution:
\begin{equation}
a_k^{(n)} = B_k^{(n)} + B_{k-1}^{(n-1)}
\end{equation}
where $B_k^{(n)}$ is defined by the hypergeometric summation:
\begin{equation}
B_k^{(n)} = \sum_{j=0}^{\lfloor k/2 \rfloor} \binom{k-j}{j} 2^{k-2j} \binom{n-j}{k-2j}.
\end{equation}
By symmetry, the remaining coefficients for $n < k \le 2n$ are given by $a_k^{(n)} = a_{2n-k}^{(n)}$.
\end{theorem}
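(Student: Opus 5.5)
The plan is to read off $a_k^{(n)} = [z^k w^n]\,\mathcal{G}(z,w)$ directly from Proposition~\ref{prop:gen_func2}. The first step is to rewrite the denominator so that it expands as a geometric series with nonnegative terms. Since $(1-w)(1-z^2w) = 1 - w - z^2 w + z^2w^2$, the denominator satisfies
\begin{equation*}
1 - w(z+1)^2 + z^2w^2 = (1-w) - zw\bigl(2 + z(1-w)\bigr).
\end{equation*}
Factoring out $(1-w)$ gives
\begin{equation*}
\frac{1}{1 - w(z+1)^2 + z^2w^2} = \sum_{m\ge 0} \frac{z^m w^m \bigl(2 + z(1-w)\bigr)^m}{(1-w)^{m+1}},
\end{equation*}
which is valid as a formal power series in $w$, because the $m$-th term is $O(w^m)$.

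Next I expand $(2+z(1-w))^m = \sum_{i} \binom{m}{i} 2^{m-i} z^i (1-w)^i$. The $(m,i)$ term is then $\binom{m}{i}2^{m-i} z^{m+i} w^m (1-w)^{-(m+1-i)}$. Using $[w^{r}](1-w)^{-(s+1)} = \binom{r+s}{s}$, the coefficient of $w^n$ in this term is $\binom{n-i}{m-i}$. Now set $k = m+i$ and $j = i$, so that $m = k-j$ and $m - i = k-2j$, with the constraint $0 \le j \le \lfloor k/2\rfloor$. Collecting powers $z^k$, I expect the coefficient of $z^k w^n$ in $1/(\text{denominator})$ to be exactly
\begin{equation*}
\sum_{j=0}^{\lfloor k/2\rfloor}\binom{k-j}{j}2^{k-2j}\binom{n-j}{k-2j} = B_k^{(n)}.
\end{equation*}
Multiplying by the numerator $1+zw$ then shifts the second copy by one in both $k$ and $n$, which gives $a_k^{(n)} = B_k^{(n)} + B_{k-1}^{(n-1)}$. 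Here I adopt the convention that $B$ vanishes for negative indices, which handles the edge case $k=0$.

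Symmetry follows from Proposition~\ref{prop:geom}. Each factor $z^2 + c_r z + 1$ is palindromic of degree $2$, so the product $D_n(z)$ is palindromic of degree $2n$. Hence $z^{2n}D_n(z^{-1}) = D_n(z)$, which is equivalent to $a_k^{(n)} = a_{2n-k}^{(n)}$.

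The main obstacle is bookkeeping rather than ideas. I need to choose the right splitting of the denominator; naive splittings such as expanding in powers of $w(z+1)^2$ produce alternating signs. I also need to check the index ranges. In particular, $\binom{n-j}{k-2j}$ must be read as zero when $k-2j > n-j$, and the formula should be stated in the half-range $0\le k\le n$ as in the theorem. Although the derivation actually holds for all $k$, I restrict the statement to the half-range and appeal to symmetry for the rest. As a sanity check, I will compare the formula with $D_1 = z^2+3z+1$: it gives $a_1^{(1)} = B_1^{(1)} + B_0^{(0)} = 2+1 = 3$, as required.
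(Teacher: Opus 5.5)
Your proposal is correct and is essentially the paper's own proof. Your rewriting $(1-w) - zw\bigl(2+z(1-w)\bigr)$ is exactly the paper's factorization $(1-w)\bigl(1-\frac{2w}{1-w}z-wz^2\bigr)$, and your index $i$ (the power of $z(1-w)$) is the paper's $j$ (the power of $wz^2$); from there the extraction via $[w^r](1-w)^{-(s+1)}=\binom{r+s}{s}$, the shift from the numerator $1+zw$, and the palindromic-factor symmetry argument match the paper step for step. Your explicit conventions for negative indices and for reading $\binom{n-j}{k-2j}$ as zero when $k-2j>n-j$ make precise some edge cases the paper leaves implicit.
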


\begin{proof}
Factoring the denominator of the generating function gives:
\begin{equation*}
1 - w(z+1)^2 + z^2w^2 = (1-w)\left(1 - \frac{2w}{1-w}z - wz^2\right).
\end{equation*}
We define the base function $B(z,w)$ such that $\mathcal{G}(z,w) = (1+zw)B(z,w)$, where:
\begin{equation*}
B(z,w) = \frac{1}{1-w} \cdot \sum_{m=0}^\infty \left(\frac{2w}{1-w}z + wz^2\right)^m.
\end{equation*}
Expanding via the binomial theorem and collecting powers of $z$ under the index substitution $k = m+j$. Since $m = k-j$ and the binomial summation bounds require $0 \le j \le m$, we have $0 \le j \le k-j$, which restricts the inner sum upper bound to $2j \le k$, or $j \le \lfloor k/2 \rfloor$. This transforms the summation into:
\begin{equation*}
B(z,w) = \sum_{k=0}^\infty z^k \sum_{j=0}^{\lfloor k/2 \rfloor} \binom{k-j}{j} 2^{k-2j} w^{k-j} (1-w)^{-(k-2j+1)}.
\end{equation*}
Extracting the coefficient $[w^n]$ using the negative binomial identity $[w^N] (1-w)^{-(R+1)} = \binom{N+R}{R}$, the relevant evaluation is:
\begin{equation*}
[w^{n-(k-j)}] (1-w)^{-(k-2j+1)} = \binom{(n-k+j) + (k-2j)}{n-k+j} = \binom{n-j}{k-2j}.
\end{equation*}
This yields the hypergeometric summation $B_k^{(n)}$. The total coefficient is resolved by the linear expansion $(1+zw)B(z,w)$, which implies $a_k^{(n)} = B_k^{(n)} + B_{k-1}^{(n-1)}$.
\end{proof}

\begin{lemma}[Coefficient Symmetry]
The sequence $a_k^{(n)}$ is symmetric, satisfying $a_k^{(n)} = a_{2n-k}^{(n)}$ for all $0 \le k \le 2n$.
\end{lemma}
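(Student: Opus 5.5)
The plan is to prove that $D_n(z)$ is self-reciprocal of degree $2n$, i.e. $z^{2n}D_n(z^{-1}) = D_n(z)$, and then read off the coefficient identity. Comparing the coefficients of $z^k$ on the two sides of this polynomial identity gives exactly $a_{2n-k}^{(n)} = a_k^{(n)}$ for $0 \le k \le 2n$. The coefficients $a_k^{(n)}$ with $k>2n$ vanish, so the range of $k$ is exactly the one in the statement.

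I would take the most direct route and work factor by factor with the trigonometric product of Proposition~\ref{prop:geom}. Write $c_r = 4\sin^2\!\left(\frac{\pi r}{2n+1}\right)$. Each factor $q_r(z) = z^2 + c_r z + 1$ is palindromic of degree $2$:
\[
z^2 q_r(z^{-1}) = 1 + c_r z + z^2 = q_r(z).
\]
Taking the product over $r=1,\dots,n$ gives
\[
z^{2n}D_n(z^{-1}) = \prod_{r=1}^n z^2 q_r(z^{-1}) = D_n(z),
\]
and $D_n$ has degree exactly $2n$ because each factor is monic.

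As a cross-check that avoids the trigonometric form, I would also verify the identity by induction on the recurrence $D_n(z) = (z+1)^2 D_{n-1}(z) - z^2 D_{n-2}(z)$. The base cases are $D_0 = 1$ and $D_1 = z^2+3z+1$, both palindromic of degrees $0$ and $2$. Assume $z^{2(n-1)}D_{n-1}(z^{-1}) = D_{n-1}(z)$ and $z^{2(n-2)}D_{n-2}(z^{-1}) = D_{n-2}(z)$. Then
\[
z^{2n}D_n(z^{-1}) = z^2(z^{-1}+1)^2\, z^{2n-2}D_{n-1}(z^{-1}) - z^{2n-2}\cdot z^{2}\, z^{-2}D_{n-2}(z^{-1}).
\]
This uses $z^2(z^{-1}+1)^2 = (z+1)^2$ and $z^{2n}\cdot z^{-2} = z^2\cdot z^{2n-4}$, and it simplifies to $(z+1)^2D_{n-1}(z) - z^2 D_{n-2}(z) = D_n(z)$. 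Either way one can equally use $z^{-n}D_n(z) = F_n(z+z^{-1}+2)$, whose right side is visibly invariant under $z\mapsto z^{-1}$.

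There is no real obstacle here; the only point needing care is the degree bookkeeping. The symmetry must be taken about the center $n$, not merely as invariance of a Laurent polynomial. So I would state explicitly that $\deg D_n = 2n$ (monic, nonzero constant term $1$) before translating the polynomial identity into the coefficient statement. Finally, this symmetry justifies the reduction in Theorem~\ref{thm:binomial_sum} to the half-range $0\le k\le n$.
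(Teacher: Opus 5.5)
Your proposal is correct and follows the paper's proof. Each quadratic factor $z^2 + 4\sin^2\!\left(\frac{\pi r}{2n+1}\right)z + 1$ is palindromic, so the product satisfies $z^{2n}D_n(z^{-1}) = D_n(z)$, and comparing coefficients of $z^k$ gives $a_k^{(n)} = a_{2n-k}^{(n)}$. Your inductive cross-check via $D_n(z) = (z+1)^2 D_{n-1}(z) - z^2 D_{n-2}(z)$ is also valid and is not needed.
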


\begin{proof}
By Proposition \ref{prop:geom}, $D_n(z)$ is defined by the product:
\[
D_n(z) = \prod_{r=1}^n \left(z^2 + 4\sin^2\left(\frac{\pi r}{2n+1}\right)z + 1\right).
\]
Let $f_r(z) = z^2 + c_r z + 1$ denote the $r$-th quadratic factor. Each factor satisfies the reciprocal identity $z^2 f_r(z^{-1}) = f_r(z)$. The product of reciprocal polynomials is reciprocal; therefore, multiplying the $n$ factors gives $z^{2n}D_n(z^{-1}) = D_n(z)$. Since $a_k^{(n)}$ is defined by $D_n(z) = \sum_{k=0}^{2n} a_k^{(n)} z^k$, equating the coefficients of $z^k$ and $z^{2n-k}$ establishes $a_k^{(n)} = a_{2n-k}^{(n)}$.
\end{proof}

\subsection{Hypergeometric Representation}

A generalized hypergeometric series $\sum_{j=0}^\infty T_j = T_0 \sum_{j=0}^\infty \frac{(a_1)_j \dots (a_p)_j}{(b_1)_j \dots (b_q)_j} \frac{z^j}{j!}$ is characterized by its initial term $T_0$ and its consecutive ratio $\frac{T_{j+1}}{T_j} = \frac{(a_1+j)\dots(a_p+j)}{(b_1+j)\dots(b_q+j)}\frac{z}{j+1}$.

For the half-range $0 \le k \le n$, let $T_j = \binom{k-j}{j} 2^{k-2j} \binom{n-j}{k-2j}$ denote the $j$-th term of the summand. The initial term evaluates to $T_0 = \binom{k}{0} 2^k \binom{n}{k} = 2^k \binom{n}{k}$. Converting the binomial coefficients into gamma functions, the ratio of consecutive terms simplifies to:
\begin{equation*}
\frac{T_{j+1}}{T_j} = \frac{(2j-k)^2(2j-k+1)^2}{4(j+1)(j-k)(j-n)(j-k+n+1)}.
\end{equation*}
Factoring the constant 4 from the denominator and isolating $j$ in the numerator roots yields:
\begin{equation*}
\frac{T_{j+1}}{T_j} = \frac{4(j - k/2)^2 (j - (k-1)/2)^2}{(j+1)(j-k)(j-n)(j-k+n+1)}.
\end{equation*}
Applying standard generalized hypergeometric recognition techniques \cite{Slater1966, Petkovsek1996}, this matches the defining ratio of a generalized hypergeometric series with argument $z=4$. The numerator roots dictate the upper parameters $a_i \in \{-k/2, -k/2, (1-k)/2, (1-k)/2\}$, and the denominator roots dictate the lower parameters $b_i \in \{-k, -n, n-k+1\}$. This proves the equivalence:
\begin{equation*}
T_j = 2^k \binom{n}{k} \frac{(-k/2)_j (-k/2)_j (\frac{1-k}{2})_j (\frac{1-k}{2})_j}{(-k)_j (-n)_j (n-k+1)_j} \frac{4^j}{j!}.
\end{equation*}
Summing over $j$ completes the generalized hypergeometric series parameterization evaluated at $z=4$:
\begin{equation}
B_k^{(n)} = 2^k\binom{n}{k} \, {}_4F_3\left( 
\begin{matrix}
-\frac{k}{2}, & -\frac{k}{2}, & \frac{1-k}{2}, & \frac{1-k}{2} \\
 & -k, & -n, & n-k+1
\end{matrix}
\;; 4 \right).
\end{equation}

This hypergeometric formulation embeds the coefficients into a proper holonomic domain, facilitating the potential application of contiguous relations, WZ telescoping, and generalized Zeilberger algorithms that operate on consecutive term ratios for algorithmic discrete inequality verification.

\section{Asymptotic Approximation and Analytic Obstructions} \label{sec:AsympApprox}

We evaluate the asymptotic behavior of the coefficient sequence $a_k^{(n)}$ as $n \to \infty$. By Theorem~\ref{thm:binomial_sum}, the coefficients are determined by the hypergeometric sum $B_k^{(n)}$. 

\subsection{Continuous Limit and the Saddle-Point Method}

Let $T(n,k,j)$ denote the summand of $B_k^{(n)}$. We analyze the sequence by defining the proportional scaling $\alpha = k/n$ and $x = j/n$. The continuous logarithmic expansion $f(\alpha, x) = \frac{1}{n}\ln T(n, \alpha n, x n)$ is evaluated using Stirling's approximation. Retaining the dominant terms of the gamma functions yields:
\begin{align*}
f(\alpha,x)
={}&(\alpha-x)\ln(\alpha-x)-x\ln x +(\alpha-2x)\ln 2
+(1-x)\ln(1-x) \\
      & -2(\alpha-2x)\ln(\alpha-2x)
-(1-\alpha+x)\ln(1-\alpha+x)
\end{align*}
The dominant asymptotic contribution of the summation is governed by the saddle point of the summand, located at the root of the partial derivative $f_x = 0$. Differentiating $f(\alpha, x)$ with respect to $x$ produces the scale-invariant saddle equation:
\begin{equation*}
(\alpha - 2x)^4 = 4x(\alpha - x)(1 - x)(1 - \alpha + x).
\end{equation*}
By the Envelope Theorem, since the primary first-order condition $f_x(\alpha, x^*) = 0$ is satisfied at the saddle point, the total second derivative of the log-summand with respect to the sequence parameter evaluates to:
\begin{equation*}
\frac{d^2}{d\alpha^2} f(\alpha, x^*) = f_{\alpha\alpha} - \frac{(f_{\alpha x})^2}{f_{xx}}.
\end{equation*}
We evaluate this condition at the sequence midpoint $\alpha = 1/2$. The relevant geometric root of the saddle equation in the domain $x \in (0, 1/4)$ evaluates to $x^* \approx 0.0387$. Evaluating the second-order partial derivatives at this point yields:
\begin{equation*}
f_{\alpha\alpha} \approx -4.42, \quad f_{\alpha x} \approx 9.15, \quad f_{xx} \approx -42.68.
\end{equation*}
Substituting these values into the total derivative operator gives:
\begin{equation*}
\frac{d^2}{d\alpha^2} f(1/2, x^*) \approx -4.42 - \frac{(9.15)^2}{-42.68} \approx -2.46.
\end{equation*}
Because the total curvature evaluates to $-2.46 < 0$, the continuous approximation of the main term exhibits negative curvature. Transferring this property to establish the strict positivity of the discrete Wronskian $\Delta_{n,k}$ requires explicit error bounds on the discrete summation.

\subsection{Obstructions to Discrete Bounds}

Extending this continuous property to a verified inequality for finite $n$ requires bounding the discrete error via Laplace's method, bounding $|\text{Error}(n)| \le C/n$.

Extracting the numerical constant $C$ requires establishing global suprema for the Taylor remainders, defined by $M_3 = \sup |f_{xxx}|$ and $M_4 = \sup |f_{xxxx}|$, over the domain of integration $x \in (0, \alpha/2)$. The third derivative of the log-summand evaluates to:
\begin{equation*}
f_{xxx}(\alpha, x) = -\frac{16}{(\alpha - 2x)^2} + \frac{1}{(\alpha - x)^2} + \frac{1}{(1 - x)^2} + \frac{1}{x^2} + \frac{1}{(1 - \alpha + x)^2}.
\end{equation*}
Evaluating the limits at the boundaries of the summation domain demonstrates that the derivatives diverge due to the singularities of the constituent gamma functions:
\begin{equation*}
\lim_{x \downarrow 0} f_{xxx}(\alpha, x) = +\infty, \quad \lim_{x \uparrow \alpha/2} f_{xxx}(\alpha, x) = +\infty.
\end{equation*}
Consequently, the global remainder bounds $M_3$ and $M_4$ are infinite. While interval splitting is conventionally used to isolate the central saddle point from the boundary singularities, the saddle point at $\alpha=1/2$ ($x^* \approx 0.0387$) is located in extreme proximity to the singularity at $x=0$. This prevents the derivation of a uniform exponential tail bound for the lower interval using standard methods. Alternative asymptotic techniques, such as applying Darboux's method or multidimensional singularity analysis directly to the bivariate generating function $\mathcal{G}(z,w)$, are hindered by the high degree of the characteristic denominator and the collision of singularities near the algebraic curves. 

Without a finite error bound, the discrete Wronskian condition $\Delta_{n,k} > 0$ cannot be formally verified using this continuous approximation. Consequently, a complete proof of Fox's Trapezoidal Conjecture for $Th(4, 2n+1)$ remains an open problem, though the exact combinatorial representations derived here provide a structural basis for further investigation.

The SageMath scripts used to construct the annihilating operators, compute the algebraic resultant expansions, verify the Chebyshev spectral factorizations, and empirically evaluate the discrete Wronskian bounds across the finite computational gap are open-source. All computational codes used are available at \url{https://github.com/saurabh-suman2/Th4q-Alexander-Generating-Function}.

\appendix
\section{Generating Function Coefficients}\label{app:gf}

The explicit polynomials $N(x,t)$ and $D(x,t)$ corresponding to the rational generating function in Theorem~\ref{thm:macro_gf} were computed and verified using arithmetic. Accompanied by computational suite mentioned in Section~\ref{sec:AsympApprox}.

The denominator of the generating function $D(x,t) = \sum_{i=0}^8 d_i(t) x^i$ is defined by:
\begin{align*}
d_0(t) &= t^2 \\
d_1(t) &= -t^6 + 4t^5 - 7t^4 + 8t^3 - 7t^2 + 4t - 1 \\
d_2(t) &= 3t^8 - 16t^7 + 38t^6 - 64t^5 + 74t^4 - 64t^3 + 38t^2 - 16t + 3 \\
d_3(t) &= -3t^{10} + 20t^9 - 62t^8 + 124t^7 - 183t^6 + 208t^5 - 183t^4 + 124t^3 - 62t^2 + 20t - 3 \\
d_4(t) &= t^{12} - 8t^{11} + 33t^{10} - 88t^9 + 175t^8 - 256t^7 + 292t^6 - 256t^5 + 175t^4 - 88t^3 + 33t^2 - 8t + 1 \\
d_5(t) &= -3t^{12} + 20t^{11} - 62t^{10} + 124t^9 - 183t^8 + 208t^7 - 183t^6 + 124t^5 - 62t^4 + 20t^3 - 3t^2 \\
d_6(t) &= 3t^{12} - 16t^{11} + 38t^{10} - 64t^9 + 74t^8 - 64t^7 + 38t^6 - 16t^5 + 3t^4 \\
d_7(t) &= -t^{12} + 4t^{11} - 7t^{10} + 8t^9 - 7t^8 + 4t^7 - t^6 \\
d_8(t) &= t^{10}
\end{align*}

The numerator $N(x,t) = \sum_{i=0}^7 n_i(t) x^i$ is defined by:
\begin{align*}
n_0(t) &= t^5 - 7t^4 + 18t^3 - 23t^2 + 18t - 7 + t^{-1} \\
n_1(t) &= -3t^7 + 22t^6 - 68t^5 + 127t^4 - 155t^3 + 127t^2 - 68t + 22 - 3t^{-1} \\
n_2(t) &= 3t^9 - 23t^8 + 77t^7 - 157t^6 + 226t^5 - 255t^4 + 226t^3 - 157t^2 + 77t - 23 + 3t^{-1} \\
n_3(t) &= -t^{11} + 8t^{10} - 30t^9 + 73t^8\! - 142t^7 + 213t^6 \!- 245t^5 \!+ 213t^4 \!- 142t^3 + 73t^2 - 30t + 8 - t^{-1} \\
n_4(t) &= 3t^{11} - 20t^{10} + 56t^9 - 94t^8 + 120t^7 - 127t^6 + 120t^5 - 94t^4 + 56t^3 - 20t^2 + 3t \\
n_5(t) &= -3t^{11} + 16t^{10} - 35t^9 + 53t^8 - 59t^7 + 53t^6 - 35t^5 + 16t^4 - 3t^3 \\
n_6(t) &= t^{11} - 4t^{10} + 7t^9 - 9t^8 + 7t^7 - 4t^6 + t^5 \\
n_7(t) &= -t^9
\end{align*}

\section{Algebraic Expansion of the Resultant}\label{app:resultant}

This section provides the coefficient expansion for the resultant evaluated in Lemma~\ref{lem:resultant}. Let $\chi_M(x, -z) = x^3 - \alpha_z x^2 + \beta_z x - z$, where $\alpha_z = 2z+z^{-1}+2$ and $\beta_z = z^2+2z+2$. The resultant is the product $R(x, u) = \chi_M(x, -z) \chi_M(x, -z^{-1})$. Expanding the two cubics in $x$ yields:
\begin{align*}
R(x,u) &= x^6 - (\alpha_z + \alpha_{z^{-1}})x^5 + (\alpha_z \alpha_{z^{-1}} + \beta_z + \beta_{z^{-1}})x^4 \\
&\quad - (z + z^{-1} + \alpha_z \beta_{z^{-1}} + \alpha_{z^{-1}} \beta_z)x^3 \\
&\quad + (z\alpha_{z^{-1}} + z^{-1}\alpha_z + \beta_z \beta_{z^{-1}})x^2 - (z\beta_{z^{-1}} + z^{-1}\beta_z)x + 1.
\end{align*}
We translate the coefficients into polynomials in $u$ by substituting $z+z^{-1}=u$, $z^2+z^{-2}=u^2-2$, and $z^3+z^{-3}=u^3-3u$:
\begin{enumerate}
    \item \textbf{Coefficient of $x^5$:} $\alpha_z + \alpha_{z^{-1}} = (2z+z^{-1}+2) + (2z^{-1}+z+2) = 3(z+z^{-1}) + 4 = 3u+4$.
    \item \textbf{Coefficient of $x^4$:} The product $\alpha_z \alpha_{z^{-1}} = (2z+z^{-1}+2)(2z^{-1}+z+2) = 2(z^2+z^{-2}) + 6(z+z^{-1}) + 9 = 2u^2+6u+5$. The sum $\beta_z + \beta_{z^{-1}} = (z^2+z^{-2}) + 2(z+z^{-1}) + 4 = u^2+2u+2$. Summing these components gives $3u^2+8u+7$.
    \item \textbf{Coefficient of $x^3$:} The cross terms evaluate to $\alpha_z \beta_{z^{-1}} + \alpha_{z^{-1}} \beta_z = (z^3+z^{-3}) + 4(z^2+z^{-2}) + 12(z+z^{-1}) + 16 = u^3+4u^2+9u+8$. Adding the remaining $(z+z^{-1}) = u$ gives $u^3+4u^2+10u+8$.
    \item \textbf{Coefficient of $x^2$:} The product $\beta_z \beta_{z^{-1}} = 2(z^2+z^{-2}) + 6(z+z^{-1}) + 9 = 2u^2+6u+5$. The cross sum $z\alpha_{z^{-1}} + z^{-1}\alpha_z = (z^2+z^{-2}) + 2(z+z^{-1}) + 4 = u^2+2u+2$. Summing these components gives $3u^2+8u+7$.
    \item \textbf{Coefficient of $x$:} $z\beta_{z^{-1}} + z^{-1}\beta_z = 3(z+z^{-1}) + 4 = 3u+4$.
\end{enumerate}
Substituting these derivations back into the expansion reveals a palindromic polynomial in $x$:
\begin{equation*}
R(x,u) = x^6 - (3u+4)x^5 + (3u^2+8u+7)x^4 - (u^3+4u^2+10u+8)x^3 + (3u^2+8u+7)x^2 - (3u+4)x + 1.
\end{equation*}
Factoring $x^3$ isolates the reciprocal variable $v = x + x^{-1}$, enabling the substitutions $x^2+x^{-2} = v^2-2$ and $x^3+x^{-3} = v^3-3v$. This maps the polynomial into a cubic in $v$:
\begin{align*}
R(x,u) &= x^3 \big[ (v^3-3v) - (3u+4)(v^2-2) + (3u^2+8u+7)v - (u^3+4u^2+10u+8) \big] \\
&= -x^3 \big[ -v^3 + (3u+4)v^2 - (3u^2+8u+4)v + (u^3+4u^2+4u) \big].
\end{align*}
Factoring this final cubic expression establishes the identity:
\begin{equation*}
-x^3(u-v)(u^2 - 2uv + v^2 + 4u - 4v + 4) = -x^3(u-v)(u-v+2)^2.
\end{equation*}

\section*{Acknowledgments}
The author is deeply grateful to Prof. Rama Mishra for her invaluable guidance, insightful feedback, and many helpful discussions throughout the development of this work. The author also extends sincere thanks to Mr. Brijesh Thakkar for careful reading of the manuscript and for continued encouragement during the preparation of this paper.

\bibliographystyle{amsplain}
\bibliography{references}

@article{AlSukaiti2023,
  author        = {AlSukaiti, M. E. and Chbili, N.},
  title         = {Explicit Alexander polynomials for weaving knots},
  journal       = {arXiv preprint arXiv:2310.14539},
  year          = {2023}
}

@article{azarpendar2024foxstrapezoidalconjecture,
  author        = {Soheil Azarpendar and Andr\'as Juh\'asz and Tam\'as K\'alm\'an},
  title         = {On Fox's trapezoidal conjecture}, 
  journal       = {arXiv preprint arXiv:2406.08662},
  year          = {2024}
}

@book{Birman1974,
  author        = {Birman, J. S.},
  title         = {Braids, Links, and Mapping Class Groups},
  series        = {Annals of Mathematics Studies},
  volume        = {82},
  publisher     = {Princeton University Press},
  address       = {Princeton, NJ},
  year          = {1974}
}

@article{Burau1936,
  author        = {Burau, W.},
  title         = {{\"U}ber Zopfgruppen und gleichsinnig verdrillte Verkettungen},
  journal       = {Abhandlungen aus dem Mathematischen Seminar der Universit{\"a}t Hamburg},
  volume        = {11},
  pages         = {179--186},
  year          = {1936}
}

@article{diprisa2024turksheadknotslinks,
  author        = {Alessio {Di Prisa} and O\u{g}uz \c{S}avk},
  title         = {Turk's head knots and links: a survey}, 
  journal       = {arXiv preprint arXiv:2409.20106},
  year          = {2024}
}

@book{Kawauchi1996,
  author        = {Kawauchi, A.},
  title         = {A Survey of Knot Theory},
  publisher     = {Birkh{\"a}user},
  address       = {Basel},
  year          = {1996}
}

@article{Keilson1971,
  author        = {Keilson, J. and Gerber, H.},
  title         = {Some results for discrete unimodality},
  journal       = {Journal of the American Statistical Association},
  volume        = {66},
  number        = {334},
  pages         = {386--389},
  year          = {1971}
}

@book{Lickorish1997,
  author        = {Lickorish, W. B. R.},
  title         = {An Introduction to Knot Theory},
  series        = {Graduate Texts in Mathematics},
  volume        = {175},
  publisher     = {Springer},
  address       = {New York},
  year          = {1997}
}

@book{Mason2002,
  author        = {Mason, J. C. and Handscomb, D. C.},
  title         = {Chebyshev Polynomials},
  publisher     = {CRC Press},
  address       = {Boca Raton, FL},
  year          = {2002}
}

@article{Murasugi1958,
  author        = {Murasugi, K.},
  title         = {On the Alexander polynomial of the alternating knot},
  journal       = {Osaka Mathematical Journal},
  volume        = {10},
  number        = {2},
  pages         = {181--189},
  year          = {1958}
}

@book{Petkovsek1996,
  author        = {Petkov\v{s}ek, M. and Wilf, H. S. and Zeilberger, D.},
  title         = {{A=B}},
  publisher     = {A. K. Peters, Ltd.},
  address       = {Wellesley, MA},
  year          = {1996}
}

@book{Rolfsen1976,
  author        = {Rolfsen, D.},
  title         = {Knots and Links},
  publisher     = {Publish or Perish},
  address       = {Berkeley, CA},
  year          = {1976}
}

@book{Slater1966,
  title         = {Generalized Hypergeometric Functions},
  author        = {Slater, Lucy Joan},
  year          = {1966},
  publisher     = {Cambridge University Press},
  address       = {Cambridge}
}

@techreport{Takemura2018,
  author        = {Takemura, A.},
  title         = {Relations among Alexander-Conway polynomials of Turk's head links},
  institution   = {Kobe University Repository},
  number        = {7116},
  year          = {2018}
}

\end{document}